\newcommand{\rrvert}{\vert}
\newcommand{\rrVert}{\Vert}
\newcommand{\llvert}{\vert}
\newcommand{\llVert}{\Vert}
\newtheorem{teo}{Theorem}[section]
\newtheorem{lem}[teo]{Lemma}
\newtheorem{prop}[teo]{Proposition}
\newcommand{\cS}{\mathcal{S}}
\newcommand{\bF}{\mathbb{F}}
\newcommand{\bR}{\mathbb{R}}
\newcommand{\sF}{\mathscr{F}}
\newcommand{\sJ}{\mathscr{J}}
\newcommand{\sL}{\mathscr{L}}
\newcommand{\sS}{\mathscr{S}}
\newcommand{\sA}{\mathscr{A}}
\newcommand{\sP}{\mathscr{P}}
\begin{document}
\begin{frontmatter}

\title{On the Cauchy problem for backward stochastic partial
differential equations in H\"{o}lder spaces\thanksref{T1}}
\runtitle{H\"{o}lder solution of backward SPDEs}

\begin{aug}
\author[A]{\fnms{Shanjian}~\snm{Tang}\corref{}\ead[label=e1]{sjtang@fudan.edu.cn}}
\and
\author[A]{\fnms{Wenning}~\snm{Wei}\ead[label=e2]{wnwei@fudan.edu.cn}}
\runauthor{S. Tang and W. Wei}
\affiliation{Fudan University}
\address[A]{Department of Finance and Control Sciences\\
Institute of Mathematical Finance\\
School of Mathematical Sciences\\
Fudan University\\
Shanghai 200433\\
China\\
\printead{e1}\\
\phantom{E-mail: }\printead*{e2}}
\end{aug}
\thankstext{T1}{Supported in part by the Natural Science Foundation of
China (Grants
\#10325101 and \#11171076), Basic Research
Program of China (973 Program, No. 2007CB814904), WCU (World Class
University) Program
through the Korea Science and Engineering Foundation funded by the
Ministry of
Education, Science and Technology (R31-20007), the Science Foundation
for Ministry of
Education of China (No. 20090071110001), and the Chang Jiang Scholars
Programme.}

\received{\smonth{4} \syear{2013}}
\revised{\smonth{9} \syear{2014}}

%
\begin{abstract}
This paper is concerned with solution in H\"{o}lder spaces of the Cauchy
problem for linear and semi-linear
backward stochastic partial differential equations (BSPDEs) of
super-parabolic type. The pair of unknown
variables are viewed as deterministic spatial functionals which take
values in Banach spaces of random (vector)
processes. We define suitable functional H\"{o}lder spaces for them and
give some inequalities among
these H\"{o}lder norms. The existence, uniqueness as well as the
regularity of solutions are proved for
BSPDEs, which contain new assertions even on deterministic PDEs.
\end{abstract}

%
\begin{keyword}[class=AMS]
\kwd[Primary ]{60H15}
\kwd[; secondary ]{35R60}
\end{keyword}
\begin{keyword}
\kwd{Backward stochastic partial differential equations}
\kwd{backward stochastic differential equations}
\kwd{H\"{o}lder space}
\kwd{heat potential}
\end{keyword}
\end{frontmatter}

\section{Introduction}\label{sec1}
In this paper, we consider the Cauchy problem for backward stochastic
partial differential equations (BSPDEs, for short) of super-parabolic type:
%
\begin{equation}
\label{bspde} \cases{ -du(t,x)= \bigl[a^{ij}(t,x)\,\partial^2_{ij}
u(t,x)+b^i(t,x)\,\partial_i u(t,x)
\vspace*{3pt}\cr
\hspace*{59pt}{}
+c(t,x)u(t,x)+f(t,x) +\sigma^l(t,x)v_l(t,x) \bigr] \,dt
\vspace*{3pt}\cr
\hspace*{56pt}{}
-v_l(t,x) \,dW^l_t,\qquad (t,x)\in[0,T)\times \bR^n;
\vspace*{5pt}\cr
u(T,x)=\Phi(x),\hspace*{76pt} x\in\bR^n.}
\end{equation}
Here, $T>0$ is fixed, $W=\{W_t\dvtx t\in[0,T]\}:=(W^1,\ldots, W^d)'$ is a
$d$-dimensional standard Brownian motion defined on some filtered
complete probability space $(\Omega,\sF,\bF,P)$ with $\bF:=\{\sF
_t\dvtx  t\in[0,T]\}$ being the augmented natural filtration generated by
$W$, $a:=(a^{ij})_{n\times n}$ is a symmetric and positive
matrix-valued deterministic functions of the time and space variable
$(t,x)$, $b:=(b^1,\ldots, b^n)'$ and $\sigma:=(\sigma^1, \ldots,\sigma^d)'$ are random vector fields, and $c, f$, and terminal term
$\Phi$ are scalar-valued random fields. Denote by $\sP$ the
predictable $\sigma$-algebra generated by~$\bF$. Here and after, we
use the Einstein summation convention, the prime denotes the transpose
of a vector or a matrix, and denote
\[
\partial_s:=\frac{\partial}{\partial s}, \qquad\partial_i:=
\frac
{\partial}{\partial x_i}, \qquad\partial^2_{ij}:=\frac{\partial
^2}{\partial x_i\,\partial x_j}.
\]
Our aim is to find a pair of random fields $(u,v)\dvtx [0,T]\times\Omega
\times\bR^n\rightarrow\bR\times\bR^d$ in suitable H\"{o}lder
spaces such that BSPDE (\ref{bspde}) is satisfied in some sense, and
to study the regularity of $(u,v)$, particularly in the space variable $x$.

As a mathematically natural extension of backward stochastic
differential equations (BSDEs) (see, e.g., \cite{Bismut73,Bismut76,Bismut78,ParPeng90,KarouiPengQuenez97,YongBook99}),
BSPDEs arise in many applications of probability theory and stochastic
processes. For instance, in the optimal control problem of stochastic
differential equations (SDEs) with incomplete information or stochastic
partial differential equations (SPDEs), a linear BSPDE arises as the
adjoint equation of SPDEs (or the Duncan--Mortensen--Zakai filtration
equation) to formulate the maximum principle (see, e.g., \cite{Bensoussan1983,Bensoussan1992,Tang1998,Tang1998conference,Zhouxunyu1992,Zhouxunyu1993}).
In the study of controlled non-Markovian SDEs by Peng \cite
{Peng1992}, the so-called stochastic Hamilton--Jacobi--Bellman
equation is a class of fully nonlinear BSPDEs. Solution of
forward--backward stochastic differential equation (FBSDE) with random
coefficients is also associated to that of a quasi-linear BSPDE, which
gives the stochastic Feynman--Kac formula (see, e.g., \cite{MaYong1997}).

Weak and strong solutions of linear BSPDEs have already received an
extensive attention in literature. Strong solution in the Sobolev space
$W^{m,2}$ is referred to, for example, \cite
{DuMeng2009,du2012strong,dokuchaev2012backward,du2013linear,HuMaYong2002,HuPeng1991,MaYong1997,MaYong1999,QiuTangYou2011},
and in $L^p$ [$p\in(1, \infty)$] is referred to, for example, \cite
{DuQiuTang2010}. The theory of linear BSPDEs in Sobolev spaces is
rather complete now. Qiu and Tang \cite{QiuTang2011} further discuss
the maximum principle of BSPDEs in a domain. It is quite natural to
consider now the H\"{o}lder solution of BSPDEs. We note that Tang \cite
{Tang2005} discusses the existence and uniqueness of a classical
solution to semi-linear BSPDE using a probabilistic approach. However,
the coefficients are required to be $k$-times (with $k\geq2+\frac
{n}{2}$) continuously differentiable in the spatial variable $x$, which
is much higher than the necessary regularity on the coefficients known
in the theory of deterministic PDEs. In this paper, the pair of unknown
variables are viewed as deterministic spatial functionals which take
values in Banach spaces of random (vector) processes. We discuss BSPDE
(\ref{bspde}) in H\"{o}lder spaces, using the methods of deterministic
PDEs (see Gilbarg and Trudinger \cite{Gilbarg2001},
Lady{\v{z}}enskaja, Solonnikov and Ural'ceva
\cite{Ladyzenskaja1968}), and establish a\vadjust{\goodbreak} H\"older theory for
BSPDEs under the spatial H\"older-continuity assumption on the
coefficients $a, b, c$ and $\sigma$. The paper seems to be the first
attempt at H\"older solution of BSPDEs.

As an alternative stochastic extension of deterministic second-order
pa\-rabolic equations, (forward) SPDEs have been studied in H\"
older spaces by Rozovskii \cite{rozovskiui1975stochastic} and
Mikulevicius \cite{Mikulevicius2000}. However, our BSPDE (\ref
{bspde}) is significantly different from an SPDE. Indeed, a BSPDE has
an additional unknown variable $v$ whose regularity is usually worse.
It serves in our BSPDE as the diffusion, but it is not a priori given.
Instead, it is endogenously determined by the given coefficients via a
martingale representation theorem. It is crucial to choose a suitable
H\"older space to describe its regularity.
In light of the functional H\"{o}lder space introduced by Mikulevicius
\cite{Mikulevicius2000} for discussing a SPDE, we define in Section~\ref{sec2}
the functional H\"{o}lder spaces such as $C^{m+\alpha}(\bR^n;\sS
^2_{\bF}[0,T])$ for $u$, and $C^{m+\alpha}(\bR^n;\sL^2_{\bF
}(0,T;\bR^d))$ for $v$. That is, we only discuss the continuity of the
unknown pair $(u,v)$ in $x$ by looking at $(u(\cdot,x), v(\cdot,x))$
as a\vspace*{1pt} functional stochastic process taking values in the space $\sS
^2_{\bF}[0,T]\times\sL^2_{\bF}(0,T;\bR^d)$.\vspace*{1pt}

We first study the following simpler BSPDE with space-invariant
coefficients $a$ and $\sigma$:
%
\begin{equation}
\label{bspdeS} \cases{ -du(t,x)= \bigl[a^{ij}(t)\,\partial^2_{ij}
u(t,x)+f(t,x)+\sigma ^l(t)v_l(t,x) \bigr]\,dt
\vspace*{3pt}\cr
\hspace*{56pt}{}
-v_l(t,x) \,dW^l_t,\qquad (t,x)\in[0,T)\times \bR^n,
\vspace*{5pt}\cr
u(T,x)=\Phi(x),\hspace*{76pt} x\in\bR^n.}
\end{equation}
Here, the coefficients $a^{ij}(\cdot)$ and $\sigma^l(\cdot)$
$(i,j=1,\cdot,n;  l=1,\cdot,d)$ do not depend on the space variable
$x$. The advantage of the simpler case is that the solution $(u,v)$
admits an explicit expression in terms of the terminal value $\Phi$
and the free term $f$ via their convolution with the heat potential. We
prove the existence and uniqueness result of this equation, and show
that $(u,v)(t, \cdot)\in C^{2+\alpha}\times C^{\alpha}$, and
$u(\cdot, x)\in C^{1/2}$, when $\Phi\in C^{1+\alpha}$ and
$f(t, \cdot)\in C^\alpha$. These regularity results are extended to
general space-variable BSPDE (\ref{bspde}) by the freezing
coefficients method and the standard continuity argument. Moreover,
when all the coefficients are deterministic, BSPDE (\ref{bspde})
becomes a deterministic PDE, and our results include new consequences
on a deterministic PDE.

The rest of the paper is organized as follows. In Section~\ref{sec2}, we define
some functional H\"{o}lder spaces, and recall analytical properties of
the heat potential. In Section~\ref{sec3}, we study the existence, uniqueness
and regularity of the solution of BSPDE (\ref{bspdeS}). In Section~\ref{sec4},
we extend the results in Section~\ref{sec3} to BSPDE (\ref{bspde}) via the
freezing coefficients method and the standard argument of continuity,
and discuss their consequences on a deterministic PDE. In Section~\ref{sec5}, we
discuss a semi-linear BSPDE.

\section{Preliminaries}\label{sec2}


\subsection{Notation and H\"{o}lder spaces}\label{sec2.1}

Define the set of multi-indices
\[
\Gamma:=\bigl\{\gamma=(\gamma_1,\dots,\gamma_n)\dvtx
\gamma_1,\dots,\gamma _n\mbox{ are all nonnegative
integers}\bigr\}.
\]
For $\gamma\in\Gamma$ and $x=(x_1,\dots,x_n)\in\bR^n$, define
\[
\llvert \gamma\rrvert:=\sum_{i=1}^n
\gamma_i, \qquad D^{\gamma}:=\frac{\partial
^{\llvert  \gamma\rrvert  }}{
\partial x_1^{\gamma_1}\,\partial x_2^{\gamma_2}\cdots\partial
x_n^{\gamma_n}}.
\]
The inner product in an Euclidean space is denoted by $\langle\cdot
,\cdot\rangle$, and the norm by $\llvert  \cdot\rrvert  $.

The following are some spaces of random variables or stochastic
processes. For $p\in[1,+\infty]$, $L^p(\Omega, P, Y)=L^p(\Omega,
\sF_T, P, Y)$ is the Banach space of Hilbert space $Y$-valued random
variables $\xi$ on a complete probability space $(\Omega, \sF_T, P)$
with finite norm
\[
\llVert \xi\rrVert _{p,Y}=E \bigl[\llVert \xi\rrVert
_Y^p \bigr]^{1/p}, \qquad\llVert \xi\rrVert
_{\infty,Y}=\mathop{\operatorname{esssup}}_{\omega}\bigl\llVert \xi(
\omega)\bigr\rrVert _Y;
\]
$\sL^p_{\bF,P}(0,T;Y)$ is the Banach space of Hilbert space
$Y$-valued $\bF$-adapted processes~$f$ with finite norm
\[
\llVert f \rrVert _{\sL^p(Y)}:= E \biggl[\int_0^T
\bigl\llVert f(t) \bigr\rrVert _Y^p \,dt
\biggr]^{1/p}, \qquad\llVert f \rrVert _{\sL^\infty(Y)}:= \mathop{\operatorname{esssup}}_{(\omega, t)}\bigl\llVert f(\omega,t)\bigr\rrVert _Y;
\]
and
$\sS^p_{\bF, P}([0,T];Y)$ is the\vspace*{1pt} Banach space of Hilbert space
$Y$-valued $\bF$-adapted (path-wisely) continuous processes $f$ with
finite norm
\[
\llVert f \rrVert _{\sS^p(Y)}:=E \Bigl[\max_{t\in[0,T]}\bigl
\llVert f(t) \bigr\rrVert _Y^p \Bigr]^{1/p},
\qquad\llVert f \rrVert _{\sS^\infty(Y)}:= \llVert f \rrVert _{\sL^\infty}.
\]
If\vspace*{1pt} $Y=\bR$ or there is no confusion on the underlying Hilbert space
$Y$, we omit $Y$ in these notations and simply write $L^p(\Omega, P),
\sL^p_{\bF,P}(0,T),\sS^p_{\bF, P}[0,T]; \llVert  \xi\rrVert  _p, \allowbreak\llVert  f \rrVert  _{\sL^p}, \llVert  f \rrVert  _{\sS^p}, \ldots.$ Furthermore, the underlying
probability $P$ is omitted in these notations if there is no confusion
and we simply write $L^p(\Omega)$, $\sL^p_{\bF}(0,T)$, $\sS^p_{\bF
}[0,T], \ldots.$

Now we define our functional differentiable H\"{o}lder spaces. Let $m$
be a nonnegative integer, $\alpha\in(0,1)$ a constant, and $Y$ a
Banach space. $C^m(\bR^n, Y)$ is the Banach space of all $Y$-valued
continuous functionals defined on $\bR^n$ which are $m$-times
continuously differentiable (strongly in $Y$) with all the derivatives
up to order $m$ being bounded in $Y$, equipped with the norm
\[
\llVert \phi \rrVert _{m,Y}:=\sum^m_{k=0}[
\phi]_{k, Y},
\]
where
\[
[\phi]_{k,Y}:=\sum_{\llvert  \gamma\rrvert  =k}
\bigl[D^{\gamma}\phi\bigr]_{0, Y}, \qquad [\phi]_{0,Y}=
\llVert \phi \rrVert _{0,Y}:=\sup_{x\in\bR^n}\bigl\llVert
\phi(x)\bigr\rrVert _{Y}.
\]
$C^{m+\alpha}(\bR^n, Y)$ is the sub-space of all $\phi\in C^m(\bR
^n, Y)$ such that $[\phi]_{m+\alpha, Y}<+\infty$, where
\[
[\phi]_{\alpha, Y}:= \mathop{\sup_{x,y\in\bR^n}}_{x\neq y}
\frac{\llVert  \phi(x)-\phi(y)\rrVert
_Y}{\llvert  x-y \rrvert  ^{\alpha}},\qquad [\phi]_{m+\alpha, Y}:=\sum
_{\llvert  \gamma\rrvert  =m}\bigl[D^{\gamma}\phi\bigr]_{\alpha, Y}.
\]
For $\phi\in C^{m+\alpha}(\bR^n, Y)$, define the norm $\llVert  \phi \rrVert
_{m+\alpha, Y}:=\llVert  \phi \rrVert  _{m, Y}+[\phi]_{m+\alpha, Y}$.
If $Y=\bR$, these spaces, semi-norms, and norms are classical
differentiable and H\"{o}lder ones on $\bR^n$, and $Y$ will be omitted
in these notation and we simply write $C^m(\bR^n), C^{m+\alpha}(\bR
^n), [\cdot]_\alpha, [\cdot]_{m+\alpha}$, and $\llvert  \cdot\rrvert  _{m+\alpha
}$. In this paper, we shall take $Y=\bR, L^p(\Omega), \sL^p_{\bF
}(0,T;\bR^{\iota}), \sS^p_{\bF}([0,T])$ for $p\in[1, \infty]$.
Moreover, we use the following abbreviations:
\[
\llVert \cdot \rrVert _{m+\alpha,\sS^p}:=\llVert \cdot \rrVert _{m+\alpha,\sS^p_{\bF
}[0,T]},
\qquad\llVert \cdot \rrVert _{m+\alpha,\sL^p}:=\llVert \cdot \rrVert
_{m+\alpha,\sL
^p_{\bF}(0,T;\bR^{\iota})},
\]
and similar abbreviations for semi-norms.

For $L^p(\Omega, \sF_T, P,Y)$-valued functional $u$ defined on
$[0,T]\times\bR^n$, we denote its partial derivatives
in the space $L^p(\Omega, \sF_T, P,Y)$ by $\partial_t u:={\partial u\over\partial t}, \partial_i u:={\partial u\over\partial x_i},
\partial_{ij}^2u:={\partial^2 u\over\partial x_i\,\partial x_j}$, etc.

$C = C(\cdot, \ldots,\cdot)$ denotes a constant depending only on quantities
appearing in parentheses. In a given context, the same letter will (generally)
be used to denote different constants depending on the same set of
arguments.

It can be verified that
%
\begin{equation}
\label{remark21} [h\psi]_{\alpha,\sL^p}\leq[h]_{0,\sL^{\infty}}[
\psi]_{\alpha,\sL^p}+[h]_{\alpha,\sL^{\infty}}[\psi]_{0,\sL^p}
\end{equation}
for any $ (h, \psi)\in C^{\alpha}(\bR^n, \sL^{\infty}_{\bF
}(0,T;\bR^{\iota}))\times C^{\alpha}(\bR^n, \sL^p_{\bF}(0,T;\bR
^{\iota}))$.
This inequality will be used in Section~\ref{sec4}.

Similar to classical H\"{o}lder spaces of scalar- or finite-dimensional
vector-valued functions, we have the following interpolation inequalities.

\begin{lem}\label{interpolation}
For $\varepsilon>0$, there is $C=C(\varepsilon,\alpha)>0$ such that
for all $\psi\in C^{2+\alpha}(\bR^n, \sL^p_{\bF}(0,T;\bR^{\iota}))$
\begin{eqnarray*}
[\psi]_{2,\sL^p}&\leq&\varepsilon[\psi]_{2+\alpha,\sL^p}+C [\psi
]_{0,\sL^p},
\\
{[\psi]}_{1+\alpha,\sL^p}&\leq&\varepsilon[\psi]_{2+\alpha,\sL
^p}+C [
\psi]_{0,\sL^p},
\\
{[\psi]}_{1,\sL^p}&\leq&\varepsilon[\psi]_{2+\alpha,\sL^p}+C [\psi
]_{0,\sL^p},
\\
{[\psi]}_{\alpha,\sL^p}&\leq&\varepsilon[\psi]_{2+\alpha,\sL^p}+C [
\psi]_{0,\sL^p}.
\end{eqnarray*}
Analogous inequalities also hold for elements of the H\"{o}lder
functional space $C^{2+\alpha}(\bR^n, L^p(\Omega))$ or $C^{2+\alpha
}(\bR^n, \sS^p_{\bF}[0,T])$.
\end{lem}

The proof is similar to that of the interpolation inequalities in the
classical H\"{o}lder spaces in Gilbarg and Trudinger \cite{Gilbarg2001}, Lemma~6.32. It is omitted here.

\subsection{Linear BSPDEs}\label{sec2.2}

Consider the Cauchy problem of linear BSPDE~(\ref{bspde}) in
functional H\"{o}lder spaces. Denote by $\cS^n$ the totality of all
$n\times n$-symmetric matrices.
Assume that all the coefficients:
\begin{eqnarray*}
a\dvtx [0,T]\times\bR^n&\rightarrow& \cS^n,\qquad  b\dvtx [0,T]\times\Omega\times \bR^n\rightarrow \bR^n,
\\
c\dvtx [0,T]\times\Omega\times\bR^n&\rightarrow&\bR,\qquad \sigma\dvtx [0,T] \times\Omega\times\bR^n\rightarrow\bR^d,
\\
f\dvtx [0,T]\times\Omega\times\bR^n&\rightarrow&\bR,\qquad \Phi\dvtx \Omega \times \bR^n\rightarrow\bR,
\end{eqnarray*}
are random fields and jointly measurable, and are $\bF$-adapted or
$\sF_T$-measurable at each $x\in\bR^n$. We make the following assumptions.

%
\begin{ass}[(Super-parabolicity)]\label{superparab2}
There are two positive constants $\lambda$ and $\Lambda$ such that
\[
\lambda\llvert \xi \rrvert ^2\leq\bigl\langle a(t,x)\xi, \xi\bigr
\rangle\leq\Lambda\llvert \xi \rrvert ^2\qquad\forall(t, x, \xi)
\in[0,T]\times\bR^n\times\bR^n.
\]
\end{ass}

%
\begin{ass}[(Boundedness)]\label{boundedness}
The functionals
\begin{eqnarray*}
a &\in& C^{\alpha}\bigl(\bR^n, L^\infty\bigl(0,T;
\cS^n\bigr)\bigr),\qquad b\in C^{\alpha}\bigl(\bR ^n,
\sL^{\infty}_{\bF}\bigl(0,T;\bR^n\bigr)\bigr),
\\
c&\in& C^{\alpha}\bigl(\bR^n, \sL ^{\infty}_{\bF}(0,T)
\bigr),
\end{eqnarray*}
and $\sigma\in C^{\alpha}(\bR^n, \sL^{\infty}_{\bF}(0,T;\bR
^d))$. Also, $a,b,c$ and $\sigma$ are bounded, that is, there is
$\Lambda>0$ such that
$\llVert  a \rrVert  _{\alpha, L^{\infty}}+\llVert  b \rrVert  _{\alpha,\sL^{\infty}}+\llVert  c \rrVert
_{\alpha,\sL^{\infty}}+\llVert  \sigma \rrVert  _{\alpha,\sL^{\infty}}\leq
\Lambda$.
\end{ass}

Note that throughout the paper $a$ is assumed to be a deterministic
$\cS^n$-valued bounded function of the time--space variable $(t,x)$.

A classical solution to BSPDE~(\ref{bspde}) in H\"{o}lder spaces is
defined as follows.

\begin{defn}
Let $\Phi\in C^{1+\alpha}(\bR^n, L^2(\Omega))$ and $f\in C^{\alpha
}(\bR^n, \sL^2_{\bF}(0,T))$. We call $(u,v)$ a classical solution to
BSPDE (\ref{bspde}) if
\[
(u,v)\in C^{\alpha}\bigl(\bR^n, \sS^2_{\bF}[0,T]
\bigr)\cap C^{2+\alpha}\bigl(\bR ^n, \sL^2_{\bF}(0,T)
\bigr)\times C^{\alpha}\bigl(\bR^n, \sL^2_{\bF}
\bigl(0,T;\bR^d\bigr)\bigr),
\]
and for all $(t,x)\in[0,T]\times\bR^n$,
\begin{eqnarray*}
u(t,x) &=& \Phi(x)
+\int_t^T
\bigl[a^{ij}(s,x)\,\partial ^2_{ij}u(s,x)+b^i(s,x)\,\partial_iu(s,x)
\\
&&\hspace*{55pt}{}+c(s,x)u(s,x)+f(s,x)+\sigma^l(s,x)v_l(s,x) \bigr] \,ds
\\
&&{} -\int
_t^Tv_l(s,x)\,dW^l_s,
\qquad P\mbox{-a.s.}
\end{eqnarray*}
\end{defn}

For simplicity of notation, define
\begin{eqnarray*}
&& C^{\alpha}_{\sS^2}\cap C^{2+\alpha}_{\sL^2}\times
C^{\alpha}_{\sL^2}
\\
&&\qquad := C^{\alpha}\bigl(\bR^n;
\sS^2_{\bF}[0,T]\bigr)\cap C^{2+\alpha}\bigl(
\bR^n;\sL ^2_{\bF}(0,T)\bigr)
C^{\alpha}\bigl(\bR^n;\sL^2_{\bF}
\bigl(0,T;\bR^d\bigr)\bigr).
\end{eqnarray*}

\subsection{Estimates on the heat potential}\label{sec2.3}
Consider the heat equation:
%
\begin{equation}
\label{heatequ} \partial_t u(t,x)=a^{ij}(t)\,\partial^2_{ij}u(t,x), \qquad(t,x)\in [0,T]\times
\bR^n,
\end{equation}
where $a=(a^{ij})_{n\times n}\dvtx [0,T]\rightarrow\cS^n$ satisfies the
super-parabolic assumption.
Define
\[
G_{s,t}(x):=\frac{1}{(4\pi)^{n/2} (\operatorname{det} A_{s,t})^{1/2}} \exp{ \biggl(-\frac{1}{4}
\bigl(A^{-1}_{s,t} x, x \bigr) \biggr)}\qquad\forall0\leq t<s
\leq T,
\]
where $A_{s,t}:=\int_t^s a(r) \,dr$. In a straightforward way, we have
%
\begin{eqnarray}
\partial_sG_{s,t}(x)&=& a^{ij}(s)\,
\partial^2_{ij}G_{s,t}(x),\qquad s>t;
\nonumber\\[-8pt]\\[-8pt]\nonumber
\partial_tG_{s,t}(x)&=&-a^{ij}(t)\,\partial^2_{ij}G_{s,t}(x),\qquad s>t.
\end{eqnarray}

%
\begin{rmk} From
Lady{\v{z}}enskaja, Solonnikov and Ural'ceva (\cite{Ladyzenskaja1968}, (1.7)) and (2.5) of Chapter~IV, we have for $\gamma\in\Gamma$,
%
\begin{equation}
\label{poten1} \int_{\bR^n}D^{\gamma}G_{s,t}(x)\,dx=
\cases{ 1, &\quad$\gamma=0$,
\vspace*{3pt}\cr
0, &\quad$\llvert \gamma\rrvert >0$}
\end{equation}
and there are $C=C(\lambda,\Lambda,\gamma,n,T)$ and $c\in(0, \frac
{1}{4})$ such that
%
\begin{equation}
\label{poten2} \bigl\llvert D^{\gamma}G_{s,t}(x)\bigr\rrvert \leq
C(s-t)^{-({n+\llvert  \gamma\rrvert  })/{2}}\exp{ \biggl(-c\frac{\llvert  x\rrvert  ^2}{s-t} \biggr)}\qquad\forall s>t.
\end{equation}
Furthermore, we have
%
\begin{equation}
\label{poten3} \int_0^s\bigl\llvert
D^{\gamma}G_{s,t}(x)\bigr\rrvert \,dt\leq C\llvert x\rrvert
^{-(n+\llvert  \gamma\rrvert  )+2}\int_0^\infty r^{{(n+\llvert  \gamma\rrvert  )/2}+2}
\exp(-c r) \,dr
\end{equation}
for any $s\in[0,T]$ and $x\neq0$,
and
%
\begin{equation}
\label{poten4} \quad\int_{\bR^n}\bigl\llvert D^{\gamma}G_{s,t}(x)
\bigr\rrvert \llvert x\rrvert ^{\alpha} \,dx\leq C (s-t)^{({\alpha-\llvert  \gamma\rrvert  })/{2}}\int
_{\bR^n} \llvert x\rrvert ^\alpha\exp \bigl(-c\llvert x
\rrvert ^2\bigr) \,dx
\end{equation}
for $s>t$ and $\alpha\in(0,1)$.
\end{rmk}

The following lemmas will be used to derive a priori H\"{o}lder
estimates in Section~\ref{sec3}.

From Mikulevicius \cite{Mikulevicius2000}, Lemma 4, we have:

\begin{lem}\label{estpoten1}
For any multi-index $\llvert  \gamma\rrvert  =2$, there exists $C=C(\lambda,\Lambda,\gamma,n,T)$ such that for $0\leq\tau\leq s\leq T$ and $\eta>0$,
\[
\int_\tau^s\biggl\llvert \int
_{\llvert  y \rrvert  \leq\eta}D^{\gamma}G_{s,t}(y) \,dy\biggr\rrvert \,dt
=\int_\tau^s\biggl\llvert \int
_{\llvert  y \rrvert  \geq\eta}D^{\gamma}G_{s,t}(y) \,dy\biggr\rrvert \,dt
\leq C.
\]
\end{lem}

%
\begin{lem}\label{estpoten3}
Let $\eta>0$ be a constant. Then for $\gamma\in\Gamma$ such that
$\llvert  \gamma\rrvert  =2$, there is a constant $C=C(\lambda,\Lambda,\gamma,\alpha,n,T)$ such that
\[
\int_{B_{\eta}(0)}\sup_{\tau\leq s}\int
_\tau^s\bigl\llvert D^{\gamma
}G_{s,t}(y)
\bigr\rrvert \llvert y \rrvert ^{\alpha} \,dt \,dy\leq C \eta^{\alpha}.
\]
\end{lem}

\begin{pf}
In view of (\ref{poten3}), we have
\[
\int_{B_{\eta}(0)}\sup_{\tau\leq s}\int
_\tau^s\bigl\llvert D^{\gamma
}G_{s,t}(y)
\bigr\rrvert \llvert y \rrvert ^{\alpha} \,dt \,dy \leq C\int
_{B_{\eta}(0)}\llvert y \rrvert ^{-n+\alpha}\,dy \leq C
\eta^{\alpha}.
\]\upqed
\end{pf}

%
\begin{lem}\label{estpoten4}
For any $x,\bar{x}\in\bR^n$ and $\gamma\in\Gamma$ such that
$\llvert  \gamma\rrvert  =2$, we have
\[
\int_{\llvert  y-x \rrvert  >\eta}\sup_{\tau\leq s}\int
_\tau^s \bigl\llvert D^{\gamma}G_{s,t}(x-y)-D^{\gamma}G_{s,t}(
\bar{x}-y)\bigr\rrvert \llvert \bar{x}-y\rrvert ^{\alpha} \,dt \,dy \leq C
\eta^{\alpha},
\]
where $\eta:=2\llvert  x-\bar{x}\rrvert  $ and $C=C(\lambda,\Lambda,\gamma,\alpha,n,T)$.
\end{lem}

\begin{pf} Define ${\widetilde x}:=x+2(\bar x-x)$. Let $\xi$ be any
point on the segment joining $x$ and $\bar{x}$.
For $\llvert  y-x \rrvert  >\eta$, we have
\begin{eqnarray*}
\llvert \xi-x\rrvert &\le& \tfrac{1}{2}\llvert x-{\widetilde x}\rrvert \le
\tfrac{1}{2}\llvert x-y \rrvert,
\\
\tfrac{1}{2}\llvert x-y
\rrvert &\leq& \llvert \xi-y\rrvert =\bigl\llvert (x-y)+(\xi-x)\bigr\rrvert \leq
\tfrac{3}{2}\llvert x-y \rrvert.
\end{eqnarray*}
In view of (\ref{poten2}) and (\ref{poten3}), we have
\begin{eqnarray*}
&&\int_{\llvert  y-x \rrvert  >\eta}\sup_{\tau\leq s}\int
_\tau^s\bigl\llvert D^{\gamma
}G_{s,t}(x-y)-D^{\gamma}G_{s,t}(
\bar{x}-y)\bigr\rrvert \llvert \bar{x}-y\rrvert ^{\alpha} \,dt \,dy
\\
&&\qquad \leq C\eta\int_{\llvert  y-x \rrvert  >\eta}\sup_{\tau\leq s}\int
_\tau^s \int_0^1
\bigl\llvert \partial_xD^{\gamma}G_{s,t} \bigl(r
\bar{x}+(1-r)x-y\bigr)\bigr\rrvert \,dr\llvert x-y \rrvert ^{\alpha} \,dt \,dy
\\
&&\qquad \leq C\eta\int_{\llvert  y-x \rrvert  >\eta}\int_0^T  \int_0^1 t^{-(n+3)/{2}}
\\
&&\hspace*{118pt}{}\times \exp \biggl(-c
\frac{\llvert  r\bar{x}+(1-r)x-y\rrvert  ^2}{t} \biggr)\,dr\llvert x-y \rrvert ^{\alpha
} \,dt \,dy
\\
&&\qquad \leq C\eta\int_0^T  \int_{\llvert  y-x \rrvert  >\eta}t^{-(n+3)/{2}}
\exp \biggl(-c\frac{\llvert  x-y \rrvert  ^2}{t} \biggr)\llvert x-y \rrvert ^{\alpha} \,dt \,dy
\\
&&\qquad \leq C\eta\int_{\llvert  y-x \rrvert  >\eta}\llvert x-y \rrvert ^{-n-1+\alpha} \,dy
\leq C\eta^{\alpha}.
\end{eqnarray*}\upqed
\end{pf}

\section{BSPDE with space-invariant coefficients $a$ and \texorpdfstring{$\sigma$}{$sigma$}}\label{sec3}
Consider the following linear BSPDE:
%
\begin{equation}
\label{simplelinearbspde}
\cases{ -du(t,x)= \bigl[a^{ij}(t)\,\partial^2_{ij}u(t,x)+f(t,x)+
\sigma ^l(t)v_l(t,x) \bigr] \,dt
\vspace*{3pt}\cr
\hspace*{56pt}{} -v_l(t,x)
\,dW^l_t,\qquad (t,x)\in[0,T)\times\bR^n,
\vspace*{5pt}\cr
u(T,x) = \Phi(x),\hspace*{77pt} x\in\bR^n,}
\end{equation}
where $a:=(a^{ij})_{n\times n}\dvtx [0,T]\rightarrow\cS^n$ is Borel
measurable and $\sigma:=(\sigma^1,\ldots,\sigma^d)'\dvtx\break  \Omega\times
[0,T]\rightarrow\bR^d$ is $\bF$-adapted. It is simpler than
BSPDE~(\ref{bspde}), for both coefficients $a$ and $\sigma$ are
assumed to be independent of the space variable $x$ (hence not varying
with the space variable, and hereafter called \textit{space-invariant}).
For this case, both Assumptions \ref{superparab2} and \ref
{boundedness} can be combined into the following one.

%
\begin{ass}\label{superparab} $a\in\sL^\infty(0,T;\cS^n)$ and
$\sigma\in\sL^\infty(0,T;\bR^d)$.
There are two positive constants $\lambda$ and $\Lambda$ such that
$\lambda\llvert  \xi \rrvert  ^2\leq\langle a(t)\xi, \xi\rangle\leq\Lambda\llvert  \xi
\rrvert  ^2$ for any $(t,\xi)\in[0,T]\times\bR^n$ and $\llVert  \sigma \rrVert  _{\sL
^{\infty}}\leq\Lambda$.
\end{ass}

The special structural assumption on both coefficients $a$ and $\sigma
$ allows us to give an explicit expression of the adapted solution
$(u,v)$ to BSPDE~(\ref{simplelinearbspde}). To see this point, let
us look at the respective contributions of both coefficients $a$ and
$\sigma$ to the solution $(u,v)$ of BSPDE~(\ref{simplelinearbspde}).

Define
%
\begin{equation}
\label{nBM}\widetilde{W}_t:=-\int_0^t
\sigma(s) \,ds+W_t, \qquad t\in[0,T]
\end{equation}
and the equivalent probability $Q$ by
%
\begin{equation}
\label{nProb}dQ:=\exp \biggl( \int_0^T\bigl
\langle\sigma (t), dW_t\bigr\rangle-\frac{1}{2} \int
_0^T \bigl\llvert \sigma(t)\bigr\rrvert
^2\,dt \biggr) \,dP.
\end{equation}
It can be verified that $\widetilde{W}$ is a standard Brownian motion
on $(\Omega,\sF_T,\bF,Q)$. BSPDE~(\ref{simplelinearbspde}) is
written into the following form:
%
\begin{eqnarray}\label{equivform}
u(t,x)&=& \Phi(x)+\int_t^T
\bigl[a^{ij}(r)\,\partial ^2_{ij}u(r,x)+f(r,x)
\bigr] \,dr
\nonumber\\[-8pt]\\[-8pt]\nonumber
&&{} -\int_r^Tv_l(r,x) \,d\widetilde
W^l_t, \qquad(t,x)\in[0,T]\times \bR^n.
\end{eqnarray}
Furthermore, we have for $(t,x)\in[0,T]\times\bR^n$
%
\begin{equation}
u(t,x)= E_Q^{\sF_t} \biggl[\Phi(x)+\int_t^T
\bigl(a^{ij}(r)\,\partial ^2_{ij}u(r,x)+f(r,x)
\bigr) \,dr \biggr].
\end{equation}
Since $a$ is deterministic and $Q$ does not depend on the space
variable $x$ (in view of Assumption~\ref{superparab}), we have for
$(t,x)\in[0,T]\times\bR^n$
\[
u(t,x)=E_Q^{\sF_t}\Phi(x)+\int_t^T
\bigl[a^{ij}(r)\,\partial ^2_{ij}
\bigl(E_Q^{\sF_t}u(r,x) \bigr)+E_Q^{\sF_t}f(r,x)
\bigr] \,dr, \qquad\mbox{a.s.} %
\]
Note that it is the integral on $[t,T]$ with respect to $r$ of the
following backward parabolic equation with $U(r,x;t):=E_Q^{\sF_t}u(r,x)$:
\[
\cases{ -\partial_r U(r,x;t)= a^{ij}(r)\,\partial^2_{ij}U(r,x;t)+E_Q^{\sF
_t}f(r,x),
&\quad $(r,x)\in[t,T)\times\bR^n$,
\vspace*{3pt}\cr
U(T,x)= E_Q^{\sF_t}
\Phi(x), &\quad$x\in\bR^n$.}
\]
Define for the convolution of the heat potential $G_{s,t}$ with a
functional $\phi$ defined on~$\bR^n$ and a functional $\psi$ defined
on~$[0,T]\times\bR^n$ as follows:  for $x\in\bR^n$,
%
\begin{eqnarray}
\label{note} R^s_t\phi(x)&:=&\int_{\bR^n}G_{s,t}(x-y)
\phi(y)\,dy\qquad\forall s>t,
\nonumber\\[-8pt]\\[-8pt]\nonumber
R^s_t\psi(\cdot) (x)&:=&\int_{\bR^n}G_{s,t}(x-y)
\psi(\cdot,y)\,dy\qquad\forall s>t.
\end{eqnarray}
It is well known that the solution of the last PDE has the following
representation: almost surely:
\begin{eqnarray}
U(r,x;t)&=& R_r^T \bigl(E_Q^{\sF_t}
\Phi \bigr) (x)+ \int_r^TR_r^s
\bigl(E_Q^{\sF_t}f(s,\cdot) \bigr) (x) \,ds,\nonumber
\\
\eqntext{(r,x)\in[t,T] \times\bR^n.}
\end{eqnarray}
Setting $r=t$, we have almost surely
\[
u(t,x)= R_t^T \bigl(E_Q^{\sF_t}\Phi
\bigr) (x)+ \int_t^TR_t^s
\bigl(E_Q^{\sF_t}f(s,\cdot) \bigr) (x) \,ds, \qquad(t,x)
\in[0,T]\times \bR^n. %
\]
It is easy to see that $\{E_Q^{\sF_t}\Phi(x), t\in[0,T]\}$ and $\{
E_Q^{\sF_t}f(s,x), t\in[0,T]\}$ are uniquely characterized by
\begin{eqnarray}
E_Q^{\sF_t}\Phi(x)&=&\varphi(t;x),\qquad E_Q^{\sF_t}f(s,x)=Y(t;s,x),\nonumber
\\
\eqntext{(t,x)\in[0,T]\times\bR^n, \mbox{ a.s.},}
\end{eqnarray}
where $(\varphi(\cdot;x), \psi(\cdot;x))$ and $(Y(\cdot;\tau,x),
g(\cdot;\tau,x))$ are the unique adapted solution of the following
two parameterized BSDEs:
%
\begin{eqnarray}\label{bsde01}
\varphi(t;x)&=&\Phi(x)+\int_t^T
\sigma^l(r)\psi_l(r;x) \,dr-\int_t^T
\psi_l(r;x) \,dW^l_r,
\nonumber\\[-8pt]\\[-8pt]
\eqntext{t\in[0, T]}
\end{eqnarray}
and
%
\begin{eqnarray}\label{bsde02}
Y(t;\tau,x)&=&f(\tau,x)+\int_t^\tau
\sigma^l(r)g_l(r;\tau,x) \,dr-\int_t^\tau
g_l(r;\tau,x) \,dW^l_r,
\nonumber\\[-8pt]\\[-8pt]
\eqntext{t\in[0,\tau],}
\end{eqnarray}
respectively. In this way, we have the desired representation of
$(u,v)$: for $(t,x)\in[0,T]\times\bR^n$,
\[
u(t,x)=R_t^T\varphi(x)+\int_t^TR_t^sY(t;r,
\cdot) (x) \,dr, %
\]
and further we expect from the linear structure of our BSPDE that
\[
v(t,x)=R_t^T\psi(t, \cdot) (x)+\int
_t^TR_t^sg(t;r,
\cdot) (x) \,dr, %
\]
which is stated as the subsequent Theorem~\ref{representationuv}.

The rest of the section is structured as follows. In Section~\ref{sec3.1}, we
prove the above explicit expression for the classical solution $(u,v)$
to BSPDE (\ref{simplelinearbspde}) in terms of the terminal term
$\Phi$ and the free term $f$. In Section~\ref{sec3.2}, we derive the a priori
H\"{o}lder estimates. Finally, in Section~\ref{sec3.3}, we prove the existence
and uniqueness result of classical solution to BSPDE (\ref
{simplelinearbspde}).

\subsection{Explicit expression of $(u,v)$}\label{sec3.1}

%
\begin{lem}\label{repreu}
Suppose that Assumption \ref{superparab} holds, $\Phi\in C^{1+\alpha
}(\bR^n, L^2(\Omega))$ and $f\in C^{\alpha}(\bR^n, \sL^2_{\bF
}(0,T))$. If $(u,v)\in C^{\alpha}_{\sS^2}\cap C^{2+\alpha}_{\sL
^2}\times C^{\alpha}_{\sL^2}$ is the\vspace*{2pt} classical solution of BSPDE
(\ref{simplelinearbspde}), then for all $(t,x)\in[0,T]\times\bR
^n$, we have almost surely
\begin{eqnarray*}
u(t,x)&=&R^T_t\Phi(x)+\int_t^T
\bigl[R^s_tf(s) (x)+\sigma ^l(s)R^s_tv_l(s)
(x) \bigr]\,ds
\\
&&{} -\int_t^T R^s_tv_l(s)
(x) \,dW^l_s.
\end{eqnarray*}
\end{lem}

\begin{pf}
For fixed $(t,x)\in[0,T]\times\bR^n$ and $s\in(t,T]$, using It\^
{o}'s formula, we have
%
\begin{eqnarray}\label{repreu1}
&&G_{s,t}(x-y)u(s,y)\nonumber
\\
&&\qquad = G_{T,t}(x-y)u(T,y)-\int_s^TG_{r,t}(x-y)
\,du(r,y)\nonumber
\\
&&\quad\qquad{} -\int_s^Tu(r,y) \,dG_{r,t}(x-y)\nonumber
\\
&&\qquad = G_{T,t}(x-y)\Phi(y)
\\
&&\quad\qquad{} +\int_s^T
\bigl[G_{r,t}(x-y)f(r,y)+\sigma ^l(r)G_{r,t}(x-y)v_l(r,y)
\bigr] \,dr\nonumber
\\
&&\quad\qquad{} -\int_s^TG_{r,t}(x-y)v_l(r,y)
\,dW^l_r\nonumber
\\
&&\quad\qquad{} -\int_s^T \bigl[a^{ij}(r)\,\partial^2_{ij}G_{r,t}(x-y)u(r,y)
-G_{r,t}(x-y)a^{ij}(r)\,\partial^2_{ij}u(r,y)
\bigr] \,dr.\hspace*{-8pt}\nonumber
\end{eqnarray}
A direct computation shows that
\[
\int_{\bR^n}\int_s^T
\bigl[a^{ij}(r)\,\partial^2_{ij}G_{r,t}(x-y)
u(r,y) -G_{r,t}(x-y)a^{ij}(r)\,\partial^2_{ij}u(r,y)
\bigr] \,dr \,dy=0.
\]
Stochastic Fubini theorem (see Da Prato \cite{DaPrato1992}, Theorem~4.18) gives that
\[
\int_{\bR^n}\int_s^TG_{r,t}(x-y)v_l(r,y)
\,dW^l_r \,dy=\int_s^T  \int_{\bR^n}G_{r,t}(x-y)v_l(r,y) \,dy
\,dW^l_r.
\]
Thus, integrating w.r.t. $y$ over $\bR^n$ both sides of (\ref
{repreu1}), we have
%
\begin{eqnarray}\label{repreu2}
&&\int_{\bR^n}G_{s,t}(x-y)u(s,y) \,dy\nonumber
\\
&&\qquad = R^T_t\Phi(x)+\int_s^T
\bigl[R^r_tf(r) (x)+\sigma ^l(r)R^r_tv_l(r)
(x) \bigr] \,dr
\\
&&\quad\qquad{} -\int_s^TR^r_tv_l(r)
(x) \,dW^l_r.\nonumber
\end{eqnarray}
In what follows, we\vspace*{1pt} compute the limit of each part of (\ref
{repreu2}) as $s\rightarrow t$.

Since $u\in C^{\alpha}(\bR^n, \sS^2_{\bF}[0,T])$, we have from
estimates (\ref{poten1}) and (\ref{poten2}) on the heat potential that
%
\begin{eqnarray}
\label{repreu3} \qquad&& E\biggl\llvert \int_{\bR^n}G_{s,t}(x-y)
u(s,y) \,dy-u(t,x)\biggr\rrvert ^2\nonumber
\\
&&\qquad = E\biggl\llvert \int_{\bR^n}G_{s,t}(x-y)
\bigl[u(s,y)-u(t,x)\bigr] \,dy\biggr\rrvert ^2\nonumber
\\
&&\qquad \leq C E\int_{\bR^n}G_{s,t}(x-y) \bigl\llvert
u(s,y)-u(t,x)\bigr\rrvert ^2 \,dy
\\
&&\qquad \leq C E\int_{\bR^n}\exp{ \bigl(-c\llvert z \rrvert
^2 \bigr)} \bigl\llvert u(s,x-\sqrt{s-t}z)-u(t,x)\bigr\rrvert
^2 \,dz 
\longrightarrow 0,\nonumber
\\
\eqntext{\mbox{as }s\downarrow t.}
\end{eqnarray}
In view of (\ref{poten1}), we have
\begin{eqnarray*}
&& E\biggl\llvert \int_t^s\int
_{\bR^n}G_{r,t}(x-y) v_l(r,y) \,dy
\,dW^l_r\biggr\rrvert ^2
\\
&&\qquad =E\int_t^s\biggl\llvert \int
_{\bR^n}G_{r,t}(x-y) \bigl[v(r,y)-v(r,x)\bigr]
\,dy+v(r,x)\biggr\rrvert ^2 \,dr
\\
&&\qquad \leq 2 E\int_t^s\biggl\llvert \int
_{\bR^n}G_{r,t}(x-y) \bigl[v(r,y)-v(r,x)\bigr] \,dy
\biggr\rrvert ^2 \,dr+2E\int_t^s
\bigl\llvert v(r,x) \bigr\rrvert ^2 \,dr
\\
&&\qquad \leq C E \int_t^s \int_{\bR^n}G_{r,t}(x-y)
\bigl\llvert v(r,y)-v(r,x)\bigr\rrvert ^2 \,dy \,dr +2E\int
_t^s\bigl\llvert v(r,x) \bigr\rrvert
^2 \,dr
\\
&&\qquad \leq C\int_{\bR^n}\sup_{r\in[t,s]}
G_{r,t}(x-y) \llvert x-y \rrvert ^{2\alpha
}\,dy \cdot \sup
_{y\in\bR^n} E \int_t^s
\frac
{\llvert  v(r,y)-v(r,x)\rrvert  ^2}{\llvert  x-y \rrvert  ^{2\alpha}}\,dr
\\
&&\quad\qquad{} +2E\int_t^s \bigl\llvert v(r,x) \bigr
\rrvert ^2\,dr.
\end{eqnarray*}
Since $v\in C^{\alpha}(\bR^n, \sL^2_{\bF}(0,T;\bR^d))$, we have
\[
\lim_{s \downarrow t}E\int_t^s\bigl
\llvert v(r,x) \bigr\rrvert ^2 \,dr=0,\qquad\lim_{s
\downarrow t}
\sup_{y\in\bR^n}E\int_t^s
\frac
{\llvert  v(r,y)-v(r,x)\rrvert  ^2}{\llvert  x-y \rrvert  ^{2\alpha}} \,dr=0.
\]
In view of subsequent Lemma \ref{zz}, we obtain
%
\begin{equation}
\label{repreu4} \lim_{s \downarrow t}E\biggl\llvert \int
_t^s \int_{\bR
^n}G_{r,t}(x-y)
v_l(r,y) \,dy \,dW^l_r\biggr\rrvert
^2=0.
\end{equation}

In a similar way, we have
%
\begin{equation}
\label{repreu5} \lim_{s \downarrow t} E\biggl\llvert \int
_t^s \bigl[R^r_tf(r)
(x)+\sigma ^l(r)R^r_tv_l(r) (x)
\bigr] \,dr\biggr\rrvert ^2=0.
\end{equation}
Letting $s\rightarrow t$ in equality (\ref{repreu2}), we have the
desired result from (\ref{repreu3}), (\ref{repreu4}) and~(\ref
{repreu5}).
\end{pf}

%
\begin{lem}\label{zz}
For $0\le t<s\le T$, we have
\[
\int_{\bR^n}\sup_{r\in[t,s]}G_{r,t}(x-y)
\llvert x-y \rrvert ^{2\alpha} \,dy<+\infty.
\]
\end{lem}

\begin{pf}
First, consider the following function $\rho$:
\[
\rho(t,r):=t^{-n/2}\exp{ \biggl(-\frac{c}{t}r^2
\biggr)}, \qquad(t, r)\in[0,T]\times\bR.
\]
We have
\[
\partial_t\rho(t,r)= \bigl(cr^2-\tfrac{1}{2}nt
\bigr)t^{-2}\rho(t,r). %
\]
Therefore, the function $\rho(\cdot,r)$ increases on $[0, T]$ for any
fixed $r$ such that $r^2>M^2:={nT\over2c}$, and we have
\[
\sup_{t\in[0, T]} \rho(t,r) \leq\rho(T,r).
\]

In view of estimate (\ref{poten2}) to the heat potential, we have
\begin{eqnarray*}
&&\int_{\bR^n}\llvert x-y \rrvert ^{2\alpha}\sup
_{r\in[t,s]}G_{r,t}(x-y) \,dy
\\
&&\qquad =\int_{\llvert  x-y \rrvert  \leq M}\llvert x-y \rrvert ^{2\alpha}\sup
_{r\in[t,s]}G_{r,t}(x-y) \,dy
\\
&&\quad\qquad{}  +\int_{\llvert  x-y \rrvert  > M}\llvert x-y \rrvert ^{2\alpha}\sup
_{r\in[t,s]}G_{r,t}(x-y) \,dy
\\
&&\qquad \leq C\int_{\llvert  z \rrvert  \leq M}\llvert z \rrvert ^{2\alpha}\sup
_{r\in[t,s]}\rho \bigl(r-t,\llvert z \rrvert \bigr) \,dz
\\
&&\quad\qquad{}  +C\int_{\llvert  z \rrvert  >M}\llvert z \rrvert ^{2\alpha}\sup
_{r\in[t,s]}\rho\bigl(r-t,\llvert z \rrvert \bigr) \,dz
\\
&&\qquad =I_1+C\int_{\llvert  z \rrvert  >M}\llvert z \rrvert
^{2\alpha}\rho\bigl(T,\llvert z \rrvert \bigr) \,dz.
\end{eqnarray*}
Since the second integral is easily verified to be finite, it remains
to show $I_1<\infty$.
Noting that $\rho(t,\llvert  z \rrvert  )$ is maximized at $t=\frac{2}{n}c\llvert  z \rrvert  ^2$ over
$[0, T]$ for any fixed $z$ such that $\llvert  z \rrvert  \leq M$, we have
\[
\sup_{r\in[t,s]}\rho\bigl(r-t, \llvert z \rrvert \bigr)\leq\rho
\biggl(\frac{2}{n}c\llvert z \rrvert ^2, \llvert z \rrvert
\biggr)\leq C\llvert z \rrvert ^{-n}
\]
and
\[
I_1\leq C\int_{\llvert  z \rrvert  \leq M}\llvert z \rrvert
^{-n+2\alpha} \,dz <+\infty.
\]
The proof is then complete.
\end{pf}

In Lemma \ref{repreu}, the expression of $u$ still depends on $v$,
which is unknown. Next, we construct an explicit expression of $(u,v)$
only in terms of the terminal term $\Phi$ and the free term $f$.

Let $\Phi\in C^{1+\alpha}(\bR^n, L^2(\Omega))$ and $f\in C^{\alpha
}(\bR^n, \sL^2_{\bF}(0,T))$. Consider the two family BSDEs~(\ref
{bsde01}) and (\ref{bsde02}): for any $x\in\bR^n$ and almost all
$\tau\in[0,T]$,
their solutions are denoted by $(\varphi(\cdot,x),\psi(\cdot,x))$
and $(Y(\cdot;\tau,x),g(\cdot;\tau,x))$, respectively. From the
theory of BSDEs, we have
\[
(\varphi, \psi)\in C^{1+\alpha}\bigl(\bR^n,
\sS^2_{\bF}[0,T]\bigr)\times C^{1+\alpha}\bigl(
\bR^n, \sL^2_{\bF}\bigl(0,T;\bR^d
\bigr)\bigr),
\]
and there is $C=C(\alpha,n,d)$ such that
%
\begin{eqnarray}\label{bsdeest1}
&& \llVert \varphi \rrVert _{1+\alpha,\sS^2}+\llVert \psi \rrVert
_{1+\alpha,\sL^2} \leq C \llVert \Phi \rrVert _{1+\alpha,L^2},
\\
&& \sup_{x}E \int_0^T
\sup_{t\leq r}\bigl\llvert Y(t;r,x)\bigr\rrvert ^2 \,dr\nonumber
\\
&&\quad\qquad{}
+\sup_{x\neq\bar{x}}\frac{E \int_0^T \sup_{t\leq
r}\llvert  Y(t;r,x)-Y(t;r,\bar{x})\rrvert  ^2\,dr}{\llvert  x-\bar{x}\rrvert  ^{2\alpha}}\nonumber
\\
\label{bsdeest2}  &&\quad\qquad {} +\sup_{x}E \int_0^T  \int_0^r\bigl\llvert g(t;r,x)\bigr\rrvert
^2\,dt \,dr
\\
&&\quad\qquad{} +\sup_{x\neq\bar{x}}\frac{E \int_0^T  \int_0^r\llvert  g(t;r,x)-g(t;r,\bar{x})\rrvert  ^2\,dt\,dr}{\llvert  x-\bar{x}\rrvert  ^{2\alpha}}\nonumber
\\
&&\qquad \leq C \llVert f \rrVert ^2_{\alpha,\sL^2}.\nonumber
\end{eqnarray}

We have the following explicit expression of $(u,v)$.

\begin{teo}\label{representationuv}
Let Assumption \ref{superparab} hold and $(\Phi, f)\in C^{1+\alpha
}(\bR^n, L^2(\Omega))\times C^{\alpha}(\bR^n, \sL^2_{\bF}(0,T))$.
Let $(\varphi,\psi)$ and $(Y,g)$ be solutions of BSDEs~(\ref
{bsde01}) and (\ref{bsde02}), respectively, and $(u,v)\in C^{\alpha
}_{\sS^2}\cap C^{2+\alpha}_{\sL^2}\times C^{\alpha}_{\sL^2}$ solve
BSPDE (\ref{simplelinearbspde}). Then for all $x\in\bR^n$,
%
\begin{eqnarray}
u(t,x)&=&R^T_t\varphi(t) (x)+\int
_t^TR^s_tY(t;s) (x)
\,ds
\nonumber\\[-8pt]\label{expressu} \\[-8pt]
\eqntext{\forall t\in [0,T], \,dP\mbox{-a.s.},}
\\
%
v_l(s,x)&=&R^T_s
\psi_l(s) (x)+\int_s^TR^r_sg_l(s;r)
(x) \,dr,
\nonumber\\[-8pt]\label{expressv}  \\[-8pt]
\eqntext{ds\times dP\mbox{-a.e., a.s., }l=1,\ldots,d,}
\end{eqnarray}
where $R^s_t$ is defined by (\ref{note}).
\end{teo}

\begin{pf}
In view of Lemma~\ref{repreu} and the definition~(\ref{nBM}), we see
that for all $(t,x)\in[0,T]\times\bR^n$,
\begin{eqnarray*}
u(t,x)&=&R^T_t\Phi(x)+\int_t^TR^s_tf(s)
(x) \,ds -\int_t^TR^s_tv_l(s)
(x) \,d\widetilde{W}{}^l_s, \qquad P\mbox{-a.s.},
\\
\varphi(t;x)&=&\Phi(x)-\int_t^T
\psi_l(s;x) \,d\widetilde{W}{}^l_s, \qquad P
\mbox{-a.s.},
\end{eqnarray*}
and for almost all $\tau\in[0,T]$ and any $s\leq\tau$,
\[
Y(s;\tau,x)=f(\tau,x)-\int_s^\tau
g_l(r;\tau,x) \,d\widetilde {W}{}^l_r, \qquad P
\mbox{-a.s.}
\]
In view of the stochastic Fubini theorem and semi-group property of
$G_{s,t}$, we have
%
\begin{eqnarray}
&& R^T_t\Phi(x)+\int_t^TR^s_tf(s)
(x) \,ds\nonumber
\\
&&\qquad = R^T_t\varphi(t) (x)+\int_t^T
R^s_tY(t;s) (x) \,ds\nonumber
\\
&&\quad\qquad{}  +\int_t^T R^T_t
\psi_l(s) (x) \,d\widetilde{W}{}^l_s +\int
_t^T R^s_t\int _t^s g_l(r;s) (x) \,d
\widetilde{W}{}^l_r \,ds
\\
&&\qquad = R^T_t\varphi(t) (x)+\int_t^TR^s_tY(t;s)
(x) \,ds\nonumber
\\
&&\quad\qquad{}  +\int_t^TR^s_t
\biggl(R^T_s\psi_l(s)+\int
_s^TR^r_sg_l(s;r)
\,dr \biggr) (x) \,d\widetilde{W}{}^l_s.\nonumber
\end{eqnarray}
Therefore,
%
\begin{eqnarray}
\label{exp1} && u(t,x)+\int_t^TR^s_tv_l(s)
(x) \,d\widetilde{W}{}^l_s\nonumber
\\
&&\qquad = R^T_t\varphi(t) (x)+\int_t^TR^s_tY(t;s)
(x) \,ds
\\
&&\quad\qquad{}  +\int_t^TR^s_t
\biggl(R^T_s\psi_l(s)+\int
_s^TR^r_sg_l(s;r)
\,dr \biggr) (x) \,d\widetilde{W}{}^l_s.\nonumber
\end{eqnarray}
In view of (\ref{bsdeest1}) and (\ref{bsdeest2}), we have for each
$x\in\mathbb{R}^n$,
\[
\int_t^T\biggl\llvert R^s_t
\biggl(R^T_s\psi(s)+\int_s^TR^r_sg(s;r)
\,dr \biggr) (x)-R^s_tv(s) (x)\biggr\rrvert
^2 \,ds<+\infty,\qquad P\mbox{-a.s.}
\]
Taking on both sides of (\ref{exp1}) the expectation with respect to
the new probability~$Q$ [see~(\ref{nProb}) for the definition]
conditioned on $\sF_t$, we have almost surely
\[
u(t,x)=R^T_t\varphi(t) (x)+\int_t^TR^s_tY(t;s)
(x) \,ds\qquad\forall x \in\mathbb{R}^n; %
\]
and
\[
\int_t^TR^s_t
\biggl(R^T_s\psi_l(s)+\int
_s^TR^r_sg_l(s;r)
\,dr-v_l(s) \biggr) (x) \,d\widetilde{W}{}^l_s=0
\]
for any $(t,x)\in[0,T]\times\mathbb{R}^n$, which implies the following:
%
\begin{equation}
E \biggl[\int_t^T\biggl\llvert
R^s_t \biggl(R^T_s\psi(s)+\int
_s^TR^r_sg(s;r)
\,dr-v(s) \biggr) (x)\biggr\rrvert ^2 \,ds \biggr]=0
\end{equation}
for any $(t,x)\in[0,T]\times\mathbb{R}^n$.
Then, for all $l=1,\ldots,d$, we have almost surely
\[
V_l(t,x):=\int_t^TR^s_t
\biggl(R^T_s\psi_l(s)+\int
_s^TR^r_sg_l(s;r)
\,dr-v_l(s) \biggr) (x) \,ds=0
\]
for any $(t,x)\in[0,T]\times\mathbb{R}^n$,
which almost surely solves a deterministic PDE and, therefore, the
nonhomogeneous term (the sum in the bigger pair of parentheses in the
last equality) of this PDE is equal to zero. Consequently, we have for
each $x\in\mathbb{R}^n$,
\[
v_l(s,x)=R^T_s\psi_l(s) (x)+
\int_s^TR^r_sg_l(s;r)
(x) \,dr, \qquad ds\times dP\mbox{-a.e., a.s.}
\]
The proof is complete.
\end{pf}

%
\begin{rmk}\label{derivative}
Let Assumption \ref{superparab} hold and $(\Phi, f)\in C^{1+\alpha
}(\bR^n, L^2(\Omega))\times C^{\alpha}(\bR^n, \sL^2_{\bF}(0,T))$.
Let $(\varphi,\psi)$ and $(Y,g)$ be solutions of BSDEs~(\ref
{bsde01}) and (\ref{bsde02}), respectively. Then, for all $x\in\bR
^n$, $R^T_t\varphi(t)(x)$ and $\int_t^T R^s_tY(t;s)(x)\,ds$ are twice
continuously differentiable in $x$ as $\sL^2_{\bF}(0,T)$-valued
functionals. Moreover, we have for all $(t,x)\in[0,T)\times\bR^n$,
\begin{eqnarray*}
\partial_iR^T_t\varphi(t) (x)&=&\int
_{\bR^n}G_{T,t}(x-y)\,\partial _i
\varphi(t,y) \,dy,\qquad P\mbox{-a.s.},
\\
\partial^2_{ij}R^T_t\varphi(t)
(x)&=&\int_{\bR^n}\partial _iG_{T,t}(x-y)
\bigl[\partial_j\varphi(t,y)-\partial_j\varphi (t,x)
\bigr] \,dy,\qquad P\mbox{-a.s.}
\end{eqnarray*}
and for all $x\in\bR^n$, $dt\times dP$-a.e., a.s.,
\begin{eqnarray*}
&& \partial_i\int_t^T
R^s_tY(t;s) (x)\,ds =\int_t^T  \int_{\bR^n}\partial_iG_{s,t}(x-y)Y(t;s,y)
\,dy \,ds,
\\
&& \partial^2_{ij}\int_t^T
R^s_tY(t;s) (x)\,ds
\\
&&\qquad =\int_t^T  \int_{\bR^n}\partial^2_{ij}G_{s,t}(x-y)
\bigl[Y(t;s,y)-Y(t;s,x) \bigr] \,dy \,ds.
\end{eqnarray*}
\end{rmk}

\subsection{H\"{o}lder estimates}\label{sec3.2}
Using the explicit expression of $(u,v)$ in Theorem \ref{representationuv}, we shall derive H\"{o}lder estimates for $(u,v)$.

%
\begin{lem}\label{estvarphiY}
Let Assumption \ref{superparab} be satisfied and suppose that
\[
(\Phi, f)\in C^{1+\alpha}\bigl(\bR^n, L^2(\Omega)
\bigr)\times C^{\alpha}\bigl(\bR ^n, \sL^2_{\bF}(0,T)
\bigr).
\]
If $(u,v)\in C^{\alpha}_{\sS^2}\cap C^{2+\alpha}_{\sL^2}\times
C^{\alpha}_{\sL^2}$ solves BSPDE (\ref{simplelinearbspde}), then
we have
\[
\llVert u \rrVert _{2+\alpha,\sL^2}\leq C \bigl(\llVert \Phi \rrVert
_{1+\alpha,L^2}+\llVert f \rrVert _{\alpha,\sL^2}\bigr),
\]
where $C=C(\lambda,\Lambda,\alpha,n,d,T)$.
\end{lem}

\begin{pf}
In view of (\ref{expressu}) and (\ref{expressv}), we need to prove
\begin{eqnarray*}
\bigl\llVert R^T_{\cdot}\varphi(\cdot)\bigr\rrVert
_{2+\alpha,\sL^2}&\leq& C \llVert \Phi \rrVert _{1+\alpha,L^2},
\\
\biggl\llVert \int_{\cdot}^TR^s_{\cdot}Y(
\cdot;s) \,ds\biggr\rrVert _{2+\alpha,\sL^2}&\leq& C \llVert f \rrVert
_{\alpha,\sL^2}.
\end{eqnarray*}
It is sufficient to prove the second inequality, and the first one can
be proved in a similar way.

For $\gamma\in\Gamma$ such that $\llvert  \gamma\rrvert  \leq1$, in view of (\ref
{poten2}), (\ref{poten4}), (\ref{bsdeest2}) and Remark \ref
{derivative}, we have
\begin{eqnarray*}
&& E\int_0^T\biggl\llvert D^{\gamma}\int
_t^TR_{s,t}Y(t;s) (x) \,ds\biggr\rrvert
^2 \,dt
\\
&&\qquad = E\int_0^T\biggl\llvert \int
_t^T  \int_{\bR^n}D^{\gamma
}G_{s,t}(x-y)Y(t;s,y)
\,dy \,ds\biggr\rrvert ^2 \,dt
\\
&&\qquad \leq E\int_0^T  \int_t^T  \int_{\bR^n}\bigl\llvert D^{\gamma
}G_{s,t}(x-y)
\bigr\rrvert \bigl\llvert Y(t;s,y)\bigr\rrvert ^2 \,dy \,ds
\\
&&\quad\qquad{}  \times\int_t^T  \int_{\bR^n}
\bigl\llvert D^{\gamma}G_{s,t}(x-y)\bigr\rrvert \,dy \,ds \,dt
\end{eqnarray*}
and, therefore,
\begin{eqnarray*}
&& E\int_0^T\biggl\llvert D^{\gamma}\int
_t^TR_{s,t}Y(t;s) (x) \,ds\biggr\rrvert
^2 \,dt
\\
&&\qquad \leq\int_{\bR^n}E\int_0^T
\sup_{t\leq s} \bigl\llvert Y(t;s,y)\bigr\rrvert ^2
\int_0^s \bigl\llvert D^{\gamma}G_{s,t}(x-y)
\bigr\rrvert \,dt \,ds \,dy
\\
&&\quad\qquad{} \times\sup_{\tau}\int_\tau^T
\int_{\bR^n} \bigl\llvert D^{\gamma
}G_{s,\tau}(x-y)
\bigr\rrvert \,dy \,ds
\\
&&\qquad \leq C \sup_yE\int_0^T
\sup_{t\leq s}\bigl\llvert Y(t;s,y)\bigr\rrvert ^2\,ds
\cdot\int_{\bR^n} \sup_s\int
_0^s \bigl\llvert D^{\gamma}G_{s,t}(x-y)
\bigr\rrvert\, dt\,dy
\\
&&\quad\qquad{}  \times\sup_{\tau}\int_\tau^T
\int_{\bR^n} \bigl\llvert D^{\gamma
}G_{s,\tau}(x-y)
\bigr\rrvert \,dy \,ds
\\
&&\qquad \leq C \sup_yE\int_0^T
\sup_{t\leq s}\bigl\llvert Y(t;s,y)\bigr\rrvert ^2 \,ds
\cdot \biggl\llvert \int_{\bR^n} \sup_{\tau\leq s}
\int_\tau^s \bigl\llvert D^{\gamma
}G_{s,t}(x-y)
\bigr\rrvert \,dt \,dy\biggr\rrvert ^2
\\
&&\qquad \leq C\llVert f \rrVert _{0,\sL^2}^2\biggl\llvert \int
_0^T  \int_{\bR^n}
t^{-({n+\llvert  \gamma\rrvert  })/{2}}\exp{ \biggl(-c\frac{\llvert  x-y \rrvert  ^2}{t} \biggr)} \,dy \,dt\biggr\rrvert
^2
\\
&&\qquad \leq C \llVert f \rrVert _{0,\sL^2}^2.
\end{eqnarray*}
That is,
%
\begin{equation}
\label{estintf1} \biggl\llVert \int_{\cdot}^TR^s_{\cdot}Y(
\cdot;s) \,ds\biggr\rrVert _{1,\sL
^2}\leq C \llVert f \rrVert
_{0,\sL^2}.
\end{equation}

For $\llvert  \gamma\rrvert  =2$, in view of (\ref{poten2}), (\ref{poten4}), (\ref
{bsdeest2}) and Remark \ref{derivative}, we have
\begin{eqnarray*}
\hspace*{-5pt}&& E\int_0^T\biggl\llvert D^{\gamma}\int
_t^TR^s_tY(t;s) \,ds
\biggr\rrvert ^2 \,dt
\\
\hspace*{-5pt}&&\qquad = C E\int_0^T\biggl\llvert \int
_t^T \! \int_{\bR^n}D^{\gamma
}G_{s,t}(x-y)
\llvert x-y \rrvert ^{\alpha} \frac{\llvert  Y(t;s,y)-Y(t;s,x)\rrvert  }{\llvert  x-y \rrvert  ^{\alpha}} \,dy \,ds\biggr\rrvert
^2 \,dt
\\
\hspace*{-5pt}&&\qquad \leq C \sup_yE\int_0^T
\sup_{t\leq s}\frac
{\llvert  Y(t;s,y)-Y(t;s,x)\rrvert  ^2}{\llvert  x-y \rrvert  ^{2\alpha}}\,ds
\\
\hspace*{-5pt}&&\quad\qquad{}\times\biggl\llvert \int_{\bR^n}\sup_{\tau\leq s}
\int_\tau^s \bigl\llvert D^{\gamma}G_{s,t}(x
- y)\bigr\rrvert \llvert x - y\rrvert ^{\alpha}\,dt \,dy\biggr\rrvert
^2
\\
\hspace*{-5pt}&&\qquad \leq C [f]_{\alpha,\sL^2}^2\biggl\llvert \int_0^T  \int_{\bR
^n}t^{-({n+\llvert  \gamma\rrvert  })/{2}}\exp{ \biggl(-c\frac{\llvert  x-y \rrvert  ^2}{t}
\biggr)} \llvert x-y \rrvert ^{\alpha}\,dy \,dt\biggr\rrvert ^2
\\
\hspace*{-5pt}&&\qquad \leq C [f]_{\alpha,\sL^2}^2.
\end{eqnarray*}
Thus,
%
\begin{equation}
\label{estintf2} \biggl[\int_{\cdot}^TR^s_{\cdot}Y(t;s)
\,ds \biggr]_{2,\sL^2}\leq C [f]_{\alpha,\sL^2}.
\end{equation}

Define $\upsilon:=2\llvert  x-\bar{x}\rrvert  $ for $x\neq\bar{x}$. By Remark \ref
{derivative}, we have for $\llvert  \gamma\rrvert  =2$,
\begin{eqnarray*}
&& D^{\gamma}\int_t^TR^s_tY(t;s)
(x) \,ds-D^{\gamma}\int_t^TR^s_tY(t;s)
(\bar{x}) \,ds
\\
&&\qquad = \int_t^T  \int_{\bR^n}D^{\gamma}G_{s,t}(x-y)
\bigl[Y(t;s,y)-Y(t;s,x) \bigr] \,dy \,ds
\\
&&\quad\qquad{} -\int_t^T  \int_{\bR^n}D^{\gamma}G_{s,t}(
\bar {x}-y) \bigl[Y(t;s,y)-Y(t;s,\bar{x}) \bigr] \,dy \,ds
\\
&&\qquad = \sum_{i=1}^4 I_i(t,x,
\bar{x})
\end{eqnarray*}
with
%
\begin{eqnarray}
\qquad I_1&:= &\int_t^T  \int
_{B_{\upsilon}(x)}D^{\gamma
}G_{s,t}(x-y)
\bigl[Y(t;s,y)-Y(t;s,x) \bigr] \,dy \,ds,\nonumber
\\[-2pt]
I_2&:= &-\int_t^T  \int
_{B_{\upsilon}(x)}D^{\gamma
}G_{s,t}(\bar{x}-y)
\bigl[Y(t;s,y)-Y(t;s,\bar{x}) \bigr] \,dy \,ds,\nonumber
\\[-2pt]
I_3&:= &-\int_t^T  \int
_{\llvert  y-x \rrvert  >\upsilon}D^{\gamma
}G_{s,t}(x-y)
\bigl[Y(t;s,x)-Y(t;s,\bar{x}) \bigr] \,dy \,ds,
\\[-2pt]
I_4&:=&\int_t^T  \int
_{\llvert  y-x \rrvert  >\upsilon} \bigl[D^{\gamma}G_{s,t}(x-y)-D^{\gamma}G_{s,t}(
\bar{x}-y) \bigr]\nonumber
\\[-2pt]
&&\hspace*{54pt}{}\times \bigl[Y(t;s,y)-Y(t;s,\bar{x}) \bigr] \,dy\,ds.\nonumber
\end{eqnarray}
Next, we estimate $I_i(t,x,\bar{x})$ for $i=1,2,3,4$. In view of (\ref
{bsdeest2}), Lemma \ref{estpoten3}, and Remark \ref{derivative},
we have
\begin{eqnarray*}
&& E\int_0^T\bigl\llvert I_1(t,x,
\bar{x})\bigr\rrvert ^2 \,dt
\\[-2pt]
&&\qquad = E\int_0^T\biggl\llvert \int
_t^T  \int_{B_{\upsilon}(x)}\bigl\llvert
D^{\gamma
}G_{s,t}(x-y)\bigr\rrvert \llvert x-y \rrvert
^{\alpha}
\\[-2pt]
&&\hspace*{106pt}{}\times \frac{\llvert  Y(t;s,y)-Y(t;s,x)\rrvert  }{\llvert  x-y \rrvert  ^{\alpha}} \,dy \,ds\biggr\rrvert ^2 \,dt
\\[-2pt]
&&\qquad \leq C\sup_y E\int_0^T
\sup_{t\leq s}\frac
{\llvert  Y(t;s,y)-Y(t;s,x)\rrvert  ^2}{\llvert  x-y \rrvert  ^{2\alpha}}\,ds
\\[-2pt]
&&\qquad\quad{} \times\biggl\llvert \int_{B_{\upsilon}(x)} \sup_{\tau\leq s}
\int_\tau^s \bigl\llvert D^{\gamma}G_{s,t}(x-y)
\bigr\rrvert \llvert x-y \rrvert ^{\alpha}\,dt \,dy\biggr\rrvert ^2
\\[-2pt]
&&\qquad \leq C[f]_{\alpha,\sL^2}^2\llvert x-\bar{x}\rrvert ^{2\alpha}.
\end{eqnarray*}
In the same way, we have
\[
E\int_0^T\bigl\llvert I_2(t,x,
\bar{x})\bigr\rrvert ^2 \,dt\leq C[f]_{\alpha,\sL
^2}^2
\llvert x-\bar{x}\rrvert ^{2\alpha}.
\]
From Lemma \ref{estpoten1}, we have
\begin{eqnarray*}
&& E\int_0^T\bigl\llvert I_3(t,x,
\bar{x})\bigr\rrvert ^2 \,dt
\\[-2pt]
&&\qquad \leq E\int_0^T\biggl\llvert \int
_t^T \bigl\llvert Y(t;s,x)-Y(t;s,\bar{x})\bigr
\rrvert \biggl\llvert \int_{\llvert  y-x \rrvert  >\upsilon} D^{\gamma}G_{s,t}(x-y)
\,dy\biggr\rrvert \,ds\biggr\rrvert ^2 \,dt
\\[-2pt]
&&\qquad \leq E\int_0^T  \int_t^T
\bigl\llvert Y(t;s,x)-Y(t;s,\bar{x})\bigr\rrvert ^2\biggl\llvert \int
_{\llvert  y-x \rrvert  >\upsilon} D^{\gamma}G_{s,t}(x-y) \,dy\biggr\rrvert
\,ds \,dt
\\[-2pt]
&&\quad\qquad {}\times \sup_t\int_t^T
\biggl\llvert \int_{\llvert  y-x \rrvert  >\upsilon}D^{\gamma}G_{s,t}(\bar
{x}-y) \,dy\biggr\rrvert \,ds
\\[-2pt]
&&\qquad \leq CE\int_0^T \sup_{t\leq s}
\bigl\llvert Y(t;s,x)-Y(t;s,\bar{x})\bigr\rrvert ^2\,ds
\\[-2pt]
&&\qquad\quad{} \times\sup_s\int_0^s
\biggl\llvert \int_{\llvert  y-x \rrvert  >\upsilon}D^{\gamma
}G_{s,t}(x-y)
\,dy\biggr\rrvert \,dt
\\[-2pt]
&&\qquad \leq C[f]_{\alpha,\sL^2}^2\llvert x-\bar{x}
\rrvert ^{2\alpha}.
\end{eqnarray*}
For $I_4(t,x,\bar{x})$, in view of (\ref{poten2}), (\ref
{bsdeest2}) and Lemma \ref{estpoten4}, we have
\begin{eqnarray*}
&&E\int_0^T\bigl\llvert I_4(t,x,
\bar{x})\bigr\rrvert ^2 \,dt
\\
&&\qquad \leq CE\int_0^T\biggl\llvert \int
_t^T  \int_{\llvert  y-x \rrvert  >\upsilon}
\bigl\llvert D^{\gamma}G_{s,t}(x-y)-D^{\gamma}G_{s,t}(
\bar{x}-y)\bigr\rrvert
\\
&&\hspace*{138pt}{}\times
\bigl\llvert Y(t;s,y)-Y(t;s,\bar{x})\bigr\rrvert
\,dy\,ds\biggr\rrvert ^2\,dt
\\
&&\qquad \leq C \sup_yE\int_0^T
\sup_{t\leq s}\frac
{\llvert  Y(t;s,y)-Y(t;s,\bar{x})\rrvert  ^2}{\llvert  \bar{x}-y\rrvert  ^{2\alpha}}\,ds
\\
&&\quad\qquad{}\times \biggl[\int_{\llvert  y-x \rrvert  >\upsilon}\sup_{\tau\leq s}
\int_\tau^s \bigl\llvert D^{\gamma}G_{s,t}(x-y)-D^{\gamma}G_{s,t}(
\bar{x}-y)\bigr\rrvert \llvert \bar{x}-y\rrvert ^{\alpha}\,dt \,dy
\biggr]^2
\\
&&\qquad \leq C[f]_{\alpha,\sL^2}^2\llvert x-\bar{x}\rrvert
^{2\alpha}.
\end{eqnarray*}
In summary, we have
%
\begin{equation}
\label{estintf3} \sum_{i=1}^4E\int
_0^T\bigl\llvert I_i(t,x,\bar{x})
\bigr\rrvert ^2 \,dt\leq C [f]_{\alpha,\sL^2}^2\llvert x-
\bar{x}\rrvert ^{2\alpha}.
\end{equation}

Combining (\ref{estintf1}), (\ref{estintf2}) and (\ref
{estintf3}), we have
\[
\biggl\llVert \int_{\cdot}^TR^s_{\cdot}Y(
\cdot;s) \,ds\biggr\rrVert _{2+\alpha,\sL^2}\leq C \llVert f \rrVert
_{\alpha,\sL^2}.
\]\upqed
\end{pf}

%
\begin{lem}\label{estpsig}
Let Assumption \ref{superparab} be satisfied and suppose that
\[
(\Phi,f)\in C^{1+\alpha}\bigl(\bR^n, L^2(\Omega)
\bigr)\times C^{\alpha}\bigl(\bR ^n, \sL^2_{\bF}(0,T)
\bigr).
\]
If $(u,v)\in C^{\alpha}_{\sS^2}\cap C^{2+\alpha}_{\sL^2}\times
C^{\alpha}_{\sL^2}$ solves BSPDE (\ref{simplelinearbspde}), then
we have
\[
\llVert v \rrVert _{\alpha,\sL^2}\leq C \bigl(\llVert \Phi \rrVert
_{1+\alpha,L^2}+\llVert f \rrVert _{\alpha,\sL^2}\bigr),
\]
where $C=C(\lambda,\Lambda,\alpha,n,d,T)$.
\end{lem}

\begin{pf}
In view of (\ref{expressu}) and (\ref{expressv}), we need to prove
\begin{eqnarray*}
\bigl\llVert R^T_{\cdot}\psi(\cdot)\bigr\rrVert
_{\alpha,\sL^2}&\leq& C \llVert \Phi \rrVert _{1+\alpha,L^2},
\\
\biggl\llVert \int_{\cdot}^TR^s_{\cdot}g(
\cdot;s) \,ds\biggr\rrVert _{\alpha,\sL^2}&\leq& C \llVert f \rrVert
_{\alpha,\sL^2}.
\end{eqnarray*}
It is sufficient to prove the second inequality, and the first one can
be proved in a similar way.

For all $x\in\bR^n$, and almost all $r\in[0,T]$,
\begin{eqnarray}
R^r_tY(t;r) (x)&=&f(r,x)+\int_t^r
\sigma(s)R^r_sg(s;r) (x) \,ds-\int_t^rR^r_sg(s;r)
(x) \,dW_s\nonumber
\\
&&{}+\int_t^ra_{ij}(s)\int
_{\bR^n}\partial ^2_{ij}G_{r,s}(x-y)
\bigl[Y(s;r,y)-Y(s;r,x) \bigr] \,ds\nonumber
\\
\eqntext{\forall t\leq r.}
\end{eqnarray}
From the theory of BSDEs, we have
\begin{eqnarray*}
\hspace*{-2pt}&&E \biggl[\int_0^r\bigl\llvert
R^r_sg(s;r) (x)\bigr\rrvert ^2 \,ds \biggr]
\\
\hspace*{-2pt}&&\qquad \leq C E \biggl[\bigl\llvert f(r,x)\bigr\rrvert ^2
\\
\hspace*{-2pt}&&\hspace*{21pt}\qquad\quad{}  +\int_0^r\biggl\llvert a_{ij}(s)
\int_{\bR^n}\partial^2_{ij}G_{r,s}(x-y)
\bigl[Y(s;r,y)-Y(s;r,x) \bigr]\,dy\biggr\rrvert ^2\,ds \biggr]
\\
\hspace*{-2pt}&&\qquad \leq C E \biggl[\bigl\llvert f(r,x)\bigr\rrvert ^2+\int
_0^r\biggl\llvert \int_{\bR^n}
\partial^2_{ij}G_{r,s}(x-y) \bigl[Y(s;r,y)-Y(s;r,x)
\bigr] \,dy\biggr\rrvert ^2\,ds \biggr].
\end{eqnarray*}
Integrating both sides on $[0,T]$, we have
\begin{eqnarray*}
&& E \biggl[\int_0^T  \int_s^T
\bigl\llvert R^r_sg(s;r) (x)\bigr\rrvert ^2
\,dr \,ds \biggr]
\\
&&\qquad =E \biggl[\int_0^T  \int
_0^r\bigl\llvert R^r_sg(s;r)
(x)\bigr\rrvert ^2 \,ds \,dr \biggr]
\\
&&\qquad \leq C E \biggl[\int_0^T\bigl\llvert f(r,x)
\bigr\rrvert ^2 \,dr
\\
&&\hspace*{21pt}\quad\qquad{} +\int_0^T  \int_0^r
\biggl\llvert \int_{\bR^n}\partial^2_{ij}G_{r,s}(x-y)
\bigl[Y(s;r,y)-Y(s;r,x) \bigr] \,dy\biggr\rrvert ^2 \,ds \,dr \biggr]
\\
&&\qquad \leq C E \biggl[\int_0^T\bigl\llvert f(r,x)
\bigr\rrvert ^2 \,dr
\\
&&\hspace*{20pt}\quad\qquad{}+\int_0^T  \int_s^T
\biggl\llvert \int_{\bR^n}\partial^2_{ij}G_{r,s}(x-y)
\bigl[Y(s;r,y)-Y(s;r,x) \bigr] \,dy\biggr\rrvert ^2 \,dr \,ds \biggr].
\end{eqnarray*}
Similarly,
\begin{eqnarray*}
&&E \biggl[\int_0^T  \int_s^T
\bigl\llvert R^r_sg(s;r) (x)-R^r_sg(s;r)
(\bar {x})\bigr\rrvert ^2 \,dr \,ds \biggr]
\\
&&\qquad \leq C E \biggl[\int_0^T\bigl\llvert
f(r,x)-f(r,\bar{x})\bigr\rrvert ^2 \,dr \biggr]
\\
&&\quad\qquad{} +CE \biggl[\int_0^T  \int
_s^T\biggl\llvert \int_{\bR^n}
\bigl(\partial^2_{ij}G_{r,s}(x-y)
\bigl[Y(s;r,y)-Y(s;r,x) \bigr]
\\
&&\hspace*{123pt}{} -\partial^2_{ij}G_{r,s}(
\bar{x}-y)
\\
&&\hspace*{177pt}{}\times \bigl[Y(s;r,y)-Y(s;r,\bar{x}) \bigr] \bigr) \,dy\biggr\rrvert
^2 \,dr \,ds \biggr].
\end{eqnarray*}
In view of the proof in Lemma \ref{estvarphiY}, we have
\[
\biggl\llVert \int_{\cdot}^TR^s_{\cdot}g(
\cdot;s) \,ds\biggr\rrVert _{\alpha,\sL^2}\leq C \llVert f \rrVert
_{\alpha,\sL^2}.
\]\upqed
\end{pf}

We have the following H\"{o}lder estimate for $(u,v)$.

\begin{teo}\label{priorest1}
Let Assumption \ref{superparab} be satisfied and
\[
(\Phi, f)\in C^{1+\alpha}\bigl(\bR^n, L^2(\Omega)
\bigr)\times C^{\alpha}\bigl(\bR ^n, \sL^2_{\bF}(0,T)
\bigr).
\]
If $(u,v)$ is a classical solution to BSPDE (\ref{simplelinearbspde}),
then
\[
\llVert u \rrVert _{\alpha,\sS^2}+\llVert u \rrVert _{2+\alpha,\sL^2}+\llVert v
\rrVert _{\alpha,\sL
^2}\leq C \bigl(\llVert \Phi \rrVert _{1+\alpha,L^2}+\llVert
f \rrVert _{\alpha,\sL^2} \bigr),
\]
where $C=C(\lambda,\Lambda,\alpha,n,d,T)$.
\end{teo}

\begin{pf}
From Theorem \ref{representationuv} and the Lemmas \ref
{estvarphiY} and \ref{estpsig}, we have
\[
\llVert u \rrVert _{2+\alpha,\sL^2}+\llVert v \rrVert _{\alpha,\sL^2}\leq C
\bigl(\llVert \Phi \rrVert _{1+\alpha,L^2}+\llVert f \rrVert _{\alpha,\sL^2}
\bigr).
\]
Since $(u,v)$ is the solution of BSPDE (\ref{simplelinearbspde}),
for all $(t,x)\in[0,T]\times\bR^n$, the equality holds almost surely:
\begin{eqnarray*}
u(t,x) &=& \Phi(x)+\int_t^T \bigl[a^{ij}(s)\,
\partial ^2_{ij}u(s,x)+f(s,x)+\sigma(s)v(s,x)\bigr]\,ds
\\[3pt]
&&{} -\int_t^T v(s,x)\,dW_s.
\end{eqnarray*}
For each $x$, it is a BSDE of terminal value $\Phi(x)$ and generator
$a^{ij}(t)\,\partial^2_{ij}u(t,x)+f(t,x)+\sigma
(t)V$. From the theory of BSDEs, we have
\begin{eqnarray*}
E \Bigl[\sup_t\bigl\llvert u(t,x)\bigr\rrvert
^2 \Bigr]&\leq& C E \biggl[\bigl\llvert \Phi(x)\bigr\rrvert
^2+\int_0^T\bigl\llvert
a^{ij}(s)\,\partial^2_{ij}u(s,x)+f(s,x)\bigr\rrvert
^2 \,ds \biggr]
\\[3pt]
&\leq& C E \biggl[\bigl\llvert \Phi(x)\bigr\rrvert ^2+\int
_0^T \bigl(\bigl\llvert \partial
^2_{ij}u(s,x)\bigr\rrvert ^2+\bigl\llvert
f(s,x)\bigr\rrvert ^2 \bigr) \,ds \biggr]
\end{eqnarray*}
and for all $\bar{x}\neq x$,
\begin{eqnarray*}
&&E \Bigl[\sup_t\bigl\llvert u(t,x)-u(t,\bar{x})\bigr
\rrvert ^2 \Bigr]
\\
&&\qquad \leq C E \bigl[\bigl\llvert \Phi(x)-\Phi(\bar{x})\bigr\rrvert ^2
\bigr]
\\
&&\quad\qquad{} +C E \biggl[\int_0^T \bigl(\bigl\llvert
\partial^2_{ij}u(s,x)-\partial ^2_{ij}u(s,
\bar{x})\bigr\rrvert ^2 +\bigl\llvert f(s,x)-f(s,\bar{x})\bigr\rrvert
^2 \bigr) \,ds \biggr].
\end{eqnarray*}
Then
\begin{eqnarray*}
\llVert u \rrVert _{\alpha,\sS^2}&\leq& C \bigl(\llVert \Phi \rrVert
_{\alpha,L^2}+\llVert u \rrVert _{2+\alpha,\sL^2}+\llVert f \rrVert
_{\alpha,\sL^2} \bigr)
\\
&\leq& C \bigl(\llVert \Phi \rrVert _{1+\alpha,L^2}+
\llVert f \rrVert _{\alpha,\sL^2} \bigr).
\end{eqnarray*}
The proof is complete.
\end{pf}


\subsection{Existence and uniqueness}\label{sec3.3}

%
\begin{teo}\label{exist1}
Let Assumption \ref{superparab} be satisfied and
\[
(\Phi, f)\in C^{1+\alpha}\bigl(\bR^n, L^2(\Omega)
\bigr)\times C^{\alpha}\bigl(\bR ^n, \sL^2_{\bF}(0,T)
\bigr).
\]
Let $(\varphi,\psi)$ and $(Y,g)$ be solutions of BSDEs~(\ref
{bsde01}) and (\ref{bsde02}), respectively. Then, the pair $(u,v)$ of
random fields defined by
\begin{eqnarray*}
u(t,x)&=&R^T_t\varphi(t) (x)+\int_t^TR^s_tY(t;s)
(x) \,ds,
\\
v(t,x)&=&R^T_t\psi(t) (x)+\int_t^TR^T_sg(t;s)
(x) \,ds %
\end{eqnarray*}
is the unique classical solution to BSPDE (\ref{simplelinearbspde}).
Moreover, $(u,v)\in(C^\alpha_{\sS^2}\cap C^{2+\alpha}_{\sL
^2})\times C^{\alpha}_{\sL^2}$, and
\[
\llVert u \rrVert _{\alpha,\sS^2}+\llVert u \rrVert _{2+\alpha,\sL^2}+\llVert v
\rrVert _{\alpha,\sL
^2}\leq C \bigl(\llVert \Phi \rrVert _{1+\alpha,L^2}+\llVert
f \rrVert _{\alpha,\sL^2} \bigr),
\]
where $C=C(\lambda,\Lambda,\alpha,n,d,T)$.
\end{teo}

\begin{pf}
In view of Remark \ref{derivative}, for all $(t,x)\in[0,T]\times\bR
^n$, we have
%
\begin{eqnarray}
\label{existchange1} && \int_t^Ta^{ij}(s)\,
\partial^2_{ij}R^T_s\varphi(s) (x)
\,ds\nonumber
\\[-1pt]
&&\qquad = \int_t^T  \int_{\bR^n}a^{ij}(s)\,
\partial ^2_{ij}G_{T,s}(x-y)\bigl[\varphi(s,y)-
\varphi(s,x)\bigr] \,dy \,ds\nonumber
\\[-1pt]
&&\qquad = \int_{\bR^n}\int_t^T -
\frac{\partial}{\partial s} G_{T,s}(x-y)\bigl[\varphi(s,y)-\varphi(s,x)\bigr] \,dy
\,ds\nonumber
\\[-1pt]
&&\qquad = \int_{\bR^n} G_{T,t}(x-y)\bigl[\varphi(t,y)-
\varphi(t,x)\bigr] \,dy
\\[-1pt]
&&\quad\qquad{} +\int_{\bR^n}\int_t^T
G_{T,s}(x-y)\,d\bigl[\varphi(s,y)-\varphi(s,x)\bigr] \,dy\nonumber
\\[-1pt]
&&\qquad =  R^T_t\varphi(t) (x)-\varphi(t,x)+\int
_t^TR^T_s \psi_l(s) (x) \,d\widetilde{W}{}^l_s\nonumber
\\[-1pt]
&&\quad\qquad{} -\int_t^T\psi_l(s,x) \,d
\widetilde{W}{}^l_s\nonumber
\\[-1pt]
&&\qquad = R^T_t\varphi(t) (x)-\Phi(x)+\int
_t^TR^T_s
\psi_l(s) (x) \,d\widetilde{W}{}^l_s.\nonumber
\end{eqnarray}
Similarly, we have
%
\begin{eqnarray}
\label{existchange2} &&\int_t^Ta^{ij}(s)\,
\partial^2_{ij}\int_s^TR^r_sY(s;r)
(x) \,dr \,ds\nonumber
\\
&&\qquad = \int_t^T  \int_s^T  \int_{\bR^n} a^{ij}(s)\,\partial ^2_{ij}G_{r,s}(x-y)
\bigl[Y(s;r,y)-Y(s;r,x) \bigr] \,dy \,dr \,ds\nonumber\hspace*{-15pt}
\\
&&\qquad =  \int_t^T  \int_s^T  \int_{\bR^n}-\frac{\partial
}{\partial s} G_{r,s}(x-y)
\bigl[Y(s;r,y)-Y(s;r,x) \bigr] \,dy \,dr \,ds\nonumber
\\
&&\qquad =  \int_t^T  \int_{\bR^n}
\int_t^r-\frac{\partial}{\partial
s}G_{r,s}(x-y) \bigl[Y(s;r,y)-Y(s;r,x) \bigr] \,ds \,dy \,dr
\nonumber\\[-8pt]\\[-8pt]\nonumber
&&\qquad = \int_t^T  \int_{\bR^n}G_{r,t}(x-y)
\bigl[Y(t;r,y)-Y(t;r,x) \bigr] \,dy \,dr\nonumber
\\
&&\quad\qquad{} +\int_t^T  \int_{\bR^n}
\int_t^rG_{r,s}(x-y)\,d_s
\bigl[Y(s;r,y)-Y(s;r,x) \bigr] \,dy \,dr\nonumber
\\
&&\qquad = -\int_t^Tf(r,x) \,dr+\int
_t^TR^r_tY(t;r) (x)\,dr\nonumber
\\
&&\quad\qquad{} +\int_t^T  \int_s^TR^r_sg_l(s;r)
(x) \,dr \,d\widetilde{W}{}^l_s.\nonumber
\end{eqnarray}
In view of (\ref{existchange1}) and (\ref{existchange2}), we have
\begin{eqnarray*}
&&\int_t^Ta^{ij}(s)\,
\partial_{ij}^2u(s,x) \,ds
\\[-1pt]
&&\qquad = \int_t^Ta^{ij}(s)\,
\partial_{ij}^2 \bigl[R^T_s
\varphi(s) (x) \bigr] \,ds +\int_t^Ta^{ij}(s)\,
\partial_{ij}^2 \biggl[\int_s^TR^r_sY(s;r)
(x) \,dr \biggr] \,ds
\\[-1pt]
&&\qquad = R^T_t\varphi(t) (x)-\Phi(x)+\int
_t^TR^T_s
\psi_l(s) (x) \,d\widetilde{W}{}^l_s -\int
_t^Tf(r,x) \,dr
\\[-1pt]
&&\quad\qquad{}+\int_t^TR^r_tY(t;r,y)
\,dr+\int_t^T  \int_s^TR^r_sg_l(s;r)
(x) \,dr \,d\widetilde{W}{}^l_s
\\[-1pt]
&&\qquad = -\Phi(x)-\int_t^Tf(r,x) \,dr+u(t,x)+\int
_t^Tv_l(s,x) \,d\widetilde{W}{}^l_s.
\end{eqnarray*}
Thus, $(u,v)$ solves BSPDE (\ref{simplelinearbspde}), that is,
\begin{eqnarray*}
u(t,x)&=& \Phi(x)+\int_t^T
\bigl[a^{ij}(s)\,\partial _{ij}^2u(s,x)+f(s,x)+
\sigma(s)v(s,x) \bigr] \,ds
\\
&&{}  -\int_t^Tv(s,x) \,dW_s.
\end{eqnarray*}

The desired estimate follows from Theorem \ref{priorest1}. The proof
is complete.
\end{pf}

Moreover, we have the following H\"{o}lder continuity of $u$ in time
$t$. For any $\tau\in[0,T]$, denote by $\llVert  \cdot  \rrVert  _{m+\alpha,\sS
^2,\tau}$ and $\llVert  \cdot  \rrVert  _{m+\alpha,\sL^2,\tau}$ the obvious H\"
{o}lder norms of a process restricted to the time interval $[\tau,T]$.

%
\begin{prop}\label{contintime}
Let Assumption \ref{superparab} be satisfied and
\[
(\Phi,f)\in C^{1+\alpha}\bigl(\bR^n, L^2(\Omega)
\bigr)\times C^{\alpha}\bigl(\bR ^n, \sL^2_{\bF}(0,T)
\bigr). %
\]
Let $(u,v)\in C^{\alpha}_{\sS^2}\cap C^{2+\alpha}_{\sL^2}\times
C^{\alpha}_{\sL^2}$ be the\vspace*{1pt} classical solution to BSPDE (\ref
{simplelinearbspde}). Then for any $\tau\in[0,T]$, we have
\[
\bigl\llVert u(\cdot,\cdot)-u(\cdot-\tau,\cdot)\bigr\rrVert _{\alpha,\sL^2,\tau}
\leq C \sqrt{\tau} \bigl(\llVert \Phi \rrVert _{1+\alpha,L^2}+\llVert f \rrVert
_{\alpha,\sL^2} \bigr),
\]
where $C=C(\lambda,\Lambda,\alpha,T,n,d)$.
\end{prop}

\begin{pf}
Since $(u,v)$ satisfies BSPDE (\ref{simplelinearbspde}), we have
\begin{eqnarray*}
&&E\int_{\tau}^T\bigl\llvert u(t,x)-u(t-\tau,x)
\bigr\rrvert ^2 \,dt
\\
&&\qquad \leq CE\int_{\tau}^T\biggl\llvert \int
_{t-\tau}^t \bigl(a^{ij}(s)\, \partial^2_{ij}u(s,x)+f(s,x)+\sigma(s)v(s,x) \bigr) \,ds\biggr
\rrvert ^2 \,dt
\\
&&\quad\qquad{} +CE\int_{\tau}^T\biggl
\llvert \int_{t-\tau}^tv(s,x) \,dW_s
\biggr\rrvert ^2 \,dt
\\
&&\qquad \leq C E\int_0^T  \int_{s\vee\tau}^{T\wedge(s+\tau)}
\bigl(\bigl\llvert \partial^2_{ij}u(s,x)\bigr\rrvert
^2+\bigl\llvert f(s,x)\bigr\rrvert ^2+\bigl\llvert v(s,x)
\bigr\rrvert ^2 \bigr) \,dt \,ds
\\
&&\quad\qquad{}+CE\int_0^T  \int
_{s\vee\tau}^{T\wedge(s+\tau
)}\bigl\llvert v(s,x)\bigr\rrvert
^2 \,dt \,ds
\\
&&\qquad \leq C\tau \bigl([u]^2_{2,\sL^2}+[f]^2_{0,\sL^2}+[v]^2_{0,\sL
^2}
\bigr)
\\
&&\qquad \leq C \tau \bigl(\llVert \Phi \rrVert ^2_{1+\alpha,L^2}+\llVert f
\rrVert ^2_{\alpha,\sL
^2} \bigr).
\end{eqnarray*}
Similarly, for any $x\neq\bar{x}$,
\begin{eqnarray*}
&&E\int_{\tau}^T\bigl\llvert u(t,x)-u(t-\tau,x)-
\bigl[u(t,\bar{x})-u(t-\tau,\bar{x})\bigr]\bigr\rrvert ^2 \,dt
\\
&&\qquad \leq C \tau \bigl(\llVert \Phi \rrVert ^2_{1+\alpha,L^2}+\llVert f
\rrVert ^2_{\alpha,\sL
^2} \bigr)\llvert x-\bar{x}\rrvert
^{2\alpha}.
\end{eqnarray*}
Therefore, we have the desired result.
\end{pf}

\section{BSPDEs with space-variable coefficients}\label{sec4}

In this section, using the conventional combinational techniques of the
freezing coefficients method and the parameter continuation argument
well developed in the theory of deterministic PDEs, we extend the a
priori H\"{o}lder estimates as well as the existence and uniqueness
result for BSPDE of the preceding section to the more general BSPDE
(\ref{bspde}).

Consider a smooth function $\varphi\in C^{\infty}_0(\bR^n)$ such that
\[
0\leq\varphi\leq1\quad\mbox{and}\quad \varphi(x)=\cases{ 1, &\quad$\llvert x
\rrvert \leq1$,
\vspace*{3pt}\cr
0, &\quad $ \llvert x\rrvert >2$.}
\]
For any $z\in\bR^n$ and $\theta>0$ fixed, define
\[
\eta^z_{\theta}(x):=\varphi\biggl(\frac{x-z}{\theta}\biggr).
\]
We easily see that for $\gamma\in\Gamma$, there is a constant
$C=C(\gamma,n)$ such that
\[
\bigl[D^{\gamma}\eta^z_{\theta}\bigr]_{0}
\leq C \theta^{-\llvert  \gamma\rrvert  },\qquad \bigl[D^{\gamma}\eta^z_{\theta}
\bigr]_{\alpha}\leq C \theta^{-\llvert  \gamma\rrvert  -\alpha}.
\]

%
\begin{lem}\label{freezest}
Let $h\in C^{m+\alpha}(\bR^n;\sL^2_{\bF}(0,T;\bR^{\iota}))$ with
$m=0,1,2$. Then there is a positive constant $C(\theta,\alpha)$ such that
\[
\llVert h\rrVert _{m+\alpha,\sL^2}\leq2 \sup_{z\in\bR^n}\bigl\llVert
\eta^z_{\theta}h\bigr\rrVert _{m+\alpha,\sL^2}+C(\theta,
\alpha)\llVert h\rrVert _{0,\sL^2}.
\]
\end{lem}

\begin{pf}
It is sufficient to prove
\[
[h]_{2+\alpha,\sL^2}\leq2 \sup_{z\in\bR^n}\bigl[
\eta^z_{\theta
}h\bigr]_{2+\alpha,\sL^2}+C(\theta,\alpha)\llVert
h\rrVert _{0,\sL^2}.
\]
The proof of the rest is similar.

For any $\theta>0$ fixed, we have
\[
[h]_{2+\alpha,\sL^2}\le I_1+I_2
\]
with
\[
I_1:= \sum_{\llvert  \gamma\rrvert  =2}\sup
_{\llvert  x-\bar{x}\rrvert  <\theta}\frac{E
[\int_0^T\llvert  D^{\gamma}h(t,x)-D^{\gamma}h(t,\bar{x})\rrvert  ^2\,dt ]
^{1/2}}{\llvert  x-\bar{x}\rrvert  ^{\alpha}}
\]
and
\[
I_2:= \sum_{\llvert  \gamma\rrvert  =2}\sup
_{\llvert  x-\bar{x}\rrvert  \geq\theta} \frac{E [\int_0^T\llvert  D^{\gamma}h(t,x)
-D^{\gamma}h(t,\bar{x})\rrvert  ^2\,dt ]^{1/2}}{\llvert  x-\bar
{x}\rrvert  ^{\alpha}}.
\]

For any $x,\bar{x}\in\bR^n$, if $\llvert  x-\bar{x}\rrvert  <\theta$, choose $z=x$,
\begin{eqnarray*}
I_1&\leq &\sum_{\llvert  \gamma\rrvert  =2}\sup
_{\llvert  x-\bar{x}\rrvert  <\theta}\frac{E
[\int_0^T\llvert  D^{\gamma}
 (\eta^x_{\theta}(x)h(t,x) )-D^{\gamma} (\eta^x_{\theta
}(\bar{x})h(t,\bar{x}) )\rrvert  ^2 ]
^{1/2}}{\llvert  x-\bar{x}\rrvert  ^{\alpha}}
\\
&\leq &\sup_{z\in\bR^n}\bigl[\eta^z_{\theta}h
\bigr]_{2+\alpha,\sL^2}.
\end{eqnarray*}
If $\llvert  x-\bar{x}\rrvert  \geq\theta$, using the interpolation inequality in
Lemma \ref{interpolation}, we have
\begin{eqnarray*}
I_2&\leq &\sum_{\llvert  \gamma\rrvert  =2}\sup
_{\llvert  x-\bar{x}\rrvert  \geq\theta}E \biggl[\int_0^T\bigl
\llvert D^{\gamma}h(t,x) -D^{\gamma}h(t,\bar{x})\bigr\rrvert
^2 \,dt \biggr]^{1/2}\theta^{-\alpha}
\\
&\leq &C(\theta,\alpha)[h]_{2,\sL^2} \leq \frac{1}{2}[h]_{2+\alpha,\sL^2}+C(
\theta,\alpha)[h]_{0,\sL^2}.
\end{eqnarray*}
Then
\[
[h]_{2+\alpha,\sL^2}\leq2 \sup_{z\in\bR^n}\bigl[
\eta^z_{\theta
}h\bigr]_{2+\alpha,\sL^2}+C\llVert h\rrVert
_{0,\sL^2}.
\]\upqed
\end{pf}

We have the following a priori H\"{o}lder estimate on the solution
$(u,v)$ to BSPDE~(\ref{bspde}).

\begin{teo}\label{priorest2}
Let the\vspace*{2pt} Assumptions \ref{superparab2} and \ref{boundedness} be
satisfied and $(\Phi, f)\in C^{1+\alpha}(\bR^n, L^2(\Omega))\times
C^{\alpha}(\bR^n, \sL^2_{\bF}(0,T))$. If $(u,v)\in C^{\alpha}_{\sS
^2}\cap C^{2+\alpha}_{\sL^2}\times C^{\alpha}_{\sL^2}$ solves BSPDE
(\ref{bspde}), we have
\[
\llVert u \rrVert _{\alpha,\sS^2}+\llVert u \rrVert _{2+\alpha,\sL^2}+\llVert v
\rrVert _{\alpha,\sL
^2}\leq C \bigl(\llVert \Phi \rrVert _{1+\alpha,L^2}+\llVert
f \rrVert _{\alpha,\sL^2} \bigr),
\]
where $C=C(\lambda,\Lambda,\alpha,n,d,T)$.
\end{teo}

\begin{pf}
For any $z\in\bR^n$ and $\theta>0$, denote
\begin{eqnarray*}
u^z_{\theta}(t,x)&:=&\eta^z_{\theta}(x)u(t,x),
\qquad v^z_{\theta
}(t,x):=\eta^z_{\theta}(x)v(t,x),
\\
\Phi^z_{\theta}(x)&:=&\eta ^z_{\theta}(x)
\Phi(x),
\end{eqnarray*}
and
\begin{eqnarray*}
f^z_{\theta}(t,x)&:=&\bigl[a^{ij}(t,x)-a^{ij}(t,z)
\bigr]\,\partial ^2_{ij}u(t,x)\eta^z_{\theta}(x)
\\
&&{} +\bigl[\sigma(t,x)-\sigma(t,z)\bigr]v(t,x)\eta^z_{\theta}(x)
\\
&&{}-2a^{ij}(t,z)\,\partial_iu(t,x)\,\partial_j
\eta^z_{\theta}(x) -a^{ij}(t,z)u(t,x)\,\partial^2_{ij}\eta^z_{\theta}(x)
\\
&&{}+b^i(t,x)\,\partial_iu(t,x)\eta^z_{\theta}(x)+c(t,x)u(t,x)
\eta ^z_{\theta}(x)+f(t,x)\eta^z_{\theta}(x)
\\
&= &\sum_{i=1}^7\sA_i(t,x,z,
\theta),
\end{eqnarray*}
with $\sA_i(t,x,z,\theta)$ denoting the obvious $i$th term
($i=1,2,\ldots,7$) in the three lines of sum.
Then we have $\Phi^z_{\theta}\in C^{1+\alpha}(\bR^n, L^2(\Omega)),
f^z_{\theta}\in C^{\alpha}(\bR^n, \sL^2_{\bF}(0,T))$, and
$(u^z_{\theta},v^z_{\theta})\in C^{\alpha}_{\sS^2}\cap C^{2+\alpha
}_{\sL^2}\times C^{\alpha}_{\sL^2}$. Moreover, $(u^z_{\theta
},v^z_{\theta})$ solves the following BSPDE:
\[
\cases{ -du^z_{\theta}(t,x)= \bigl[a^{ij}(t,z)\,\partial^2_{ij}u^z_{\theta
}(t,x)+f^z_{\theta}(t,x)
+\sigma^l(t,z) \bigl(v^z_{\theta}(t,x)
\bigr)_l \bigr] \,dt
\vspace*{3pt}\cr
\hspace*{62pt}{}- \bigl(v^z_{\theta}(t,x)
\bigr)_l \,dW^l_t,\qquad (t,x)\in [0,T]\times \bR^n;
\vspace*{5pt}\cr
u^z_{\theta}(T,x) =
\Phi^z_{\theta}(x),\hspace*{86pt} x\in\bR^n.}
\]
To simplify notation, define the following two types of universal constants:
\begin{eqnarray*}
C&:=&C(\lambda,\Lambda,\alpha,n,d,T),
\\
C(\cdot)&:=&C(\cdot,\lambda,\Lambda,
\alpha,n,d,T).
\end{eqnarray*}

In view of Theorem \ref{priorest1}, we have
\[
\bigl\llVert u^z_{\theta}\bigr\rrVert _{2+\alpha,\sL^2}+\bigl
\llVert v^z_{\theta}\bigr\rrVert _{\alpha,\sL
^2}\leq C \bigl(
\bigl\llVert \Phi^z_{\theta}\bigr\rrVert _{1+\alpha,L^2}+
\bigl\llVert f^z_{\theta}\bigr\rrVert _{\alpha,\sL^2} \bigr).
\]
From Lemma \ref{freezest}, we have
%
\begin{eqnarray}
\label{estuv00} \llVert u \rrVert _{2+\alpha,\sL^2}+\llVert v \rrVert
_{\alpha,\sL^2}
&\leq&  C \Bigl(\sup_z\bigl\llVert \Phi^z_{\theta}
\bigr\rrVert _{1+\alpha,L^2}+\sup_z\bigl\llVert
f^z_{\theta}\bigr\rrVert _{\alpha,\sL^2} \Bigr)
\nonumber\\[-8pt]\\[-8pt]\nonumber
&&{}   +C(\theta)
\bigl(\llVert u \rrVert _{0,\sL^2}+\llVert v \rrVert _{0,\sL^2}
\bigr).\nonumber
\end{eqnarray}
Thus, to estimate $(u,v)$, we need to estimate $\Phi^z_{\theta}$ and
$\sA_i, i=1,\ldots,7$, in terms of~$f^z_{\theta}$.
\begin{eqnarray*}
\bigl\llVert \Phi^z_{\theta}\bigr\rrVert _{1+\alpha,L^2} &=&
\bigl[\eta^z_{\theta}\Phi\bigr]_{0,L^2}+\bigl[
\eta^z_{\theta}\Phi \bigr]_{1,L^2}+\bigl[
\eta^z_{\theta}\Phi\bigr]_{1+\alpha,L^2}
\\
&\leq& C \biggl(1+\frac{1}{\theta}+\frac{1}{\theta^{1+\alpha
}} \biggr)[
\Phi]_{0,L^2}+C \biggl(1+\frac{1}{\theta^{\alpha}} \biggr)[\Phi]_{1,L^2}
\\
&&{} +\frac{C}{\theta}[\Phi]_{\alpha,L^2}+[\Phi]_{1+\alpha,L^2}
\\
&\leq& C(
\theta) \llVert \Phi \rrVert _{1+\alpha,L^2}.
\end{eqnarray*}
Denote by $[\cdot]_{m+\alpha,\sL^2,A}$ and $\llVert  \cdot  \rrVert  _{m+\alpha,\sL^2,A}$ the semi-norm and norm of functionals on subset $A\subset
\bR^n$ instead of on the whole space $\bR^n$. It is obvious that $\sA
_1(t,x,z,\theta)\equiv0$ for $x\notin B_{2\theta}(z)$. In view of
inequality~(\ref{remark21}) and the interpolation inequality in
Lemma \ref{interpolation}, we have
\begin{eqnarray*}
\bigl\llVert \sA_1(\cdot,\cdot,z,\theta)\bigr\rrVert
_{\alpha,\sL^2}&=&\bigl\llVert \sA_1(\cdot,\cdot,z,\theta)\bigr
\rrVert _{\alpha,\sL^2,B_{2\theta}(z)}
\\
&\leq& \bigl[a^{ij}(\cdot,\cdot)-a^{ij}(\cdot,z)
\bigr]_{0,\sL^{\infty
},B_{2\theta}(z)}\bigl\llVert \partial^2_{ij}u\bigr
\rrVert _{\alpha,\sL^2}
\\
&&{} +\bigl[a^{ij}(\cdot,\cdot)-a^{ij}(
\cdot,z)\bigr]_{\alpha,\sL^{\infty
}}[u]_{2,\sL^2}
\\
&&{} +\bigl[a^{ij}(\cdot,\cdot)-a^{ij}(\cdot,z)
\bigr]_{0,\sL^{\infty
},B_{2\theta}(z)}\bigl[\eta^z_{\theta}
\bigr]_{\alpha}[u]_{2,\sL^2}
\\
&\leq& \Lambda(2\theta)^{\alpha} \bigl([u]_{2+\alpha,\sL
^2}+[u]_{2,\sL^2}
\bigr)+C[u]_{2,\sL^2}
\\
&\leq& \Lambda(2\theta)^{\alpha} \bigl([u]_{2+\alpha,\sL
^2}+
\varepsilon[u]_{2+\alpha,\sL^2}+C(\varepsilon)[u]_{0,\sL
^2} \bigr)
\\
&&{} +C \bigl(\varepsilon[u]_{2+\alpha,\sL^2}+C(\varepsilon)[u]_{0,\sL
^2}
\bigr)
\\
&\leq& C \bigl(\theta^{\alpha}(1+\varepsilon)+\varepsilon
\bigr)[u]_{2+\alpha,\sL^2} +C(\varepsilon,\theta)[u]_{0,\sL^2}.
\end{eqnarray*}
Similarly, we have
\begin{eqnarray*}
\bigl\llVert \sA_2(\cdot,\cdot,z,\theta)\bigr\rrVert
_{\alpha,\sL^2}&\leq& C\theta ^{\alpha}\llVert v \rrVert
_{\alpha,\sL^2}+C[v]_{0,\sL^2},
\\
\bigl\llVert \sA_3(\cdot,\cdot,z,\theta)\bigr\rrVert
_{\alpha,\sL^2} &\leq& C[\partial_iu]_{0,\sL^2}\bigl\llvert
\partial_j\eta^z_{\theta
}\bigr\rrvert
_{\alpha} +C[\partial_iu]_{\alpha,\sL^2}\bigl[
\partial_j\eta ^z_{\theta}\bigr]_0
\\
&\leq& C \biggl(\frac{1}{\theta} + \frac{1}{\theta^{1 + \alpha
}} \biggr) \bigl[
\varepsilon[u]_{2+\alpha,\sL^2} + C(\varepsilon )[u]_{0,\sL^2} \bigr]
\\
&&{} +\frac{C}{\theta} \bigl[\varepsilon[u]_{2+\alpha,\sL^2} + C(
\varepsilon)[u]_{0,\sL^2} \bigr]
\\
&\leq& C \biggl(\frac{1}{\theta}+\frac{1}{\theta^{1+\alpha
}} \biggr)
\varepsilon[u]_{2+\alpha,\sL^2}+C(\varepsilon,\theta )[u]_{0,\sL^2},
\\
\bigl\llVert \sA_4(\cdot,\cdot,z,\theta)\bigr\rrVert
_{\alpha,\sL^2} &\leq &C[u]_{0,\sL^2}\bigl\llvert \partial^2_{ij}
\eta^z_{\theta}\bigr\rrvert _{\alpha
}+C[u]_{\alpha,\sL^2}
\bigl[\partial^2_{ij}\eta^z_{\theta}
\bigr]_0
\\
&\leq &C \biggl(\frac{1}{\theta^2}+\frac{1}{\theta^{2+\alpha
}} \biggr)[u]_{0,\sL^2}
\\
&&{}
+\frac{C}{\theta^2} \bigl(\varepsilon[u]_{2+\alpha,\sL
^2}+C_{\varepsilon}[u]_{0,\sL^2}
\bigr)
\\
&\leq &\frac{C}{\theta^2}\varepsilon[u]_{2+\alpha,\sL
^2}+C(\varepsilon,
\theta)[u]_{0,\sL^2},
\\
\bigl\llVert \sA_5(\cdot,\cdot,z,\theta)\bigr\rrVert
_{\alpha,\sL^2} &\leq& C[\partial_iu]_{0,\sL^2}\bigl\llvert
\eta^z_{\theta}\bigr\rrvert _{\alpha
}+C[
\partial_iu]_{\alpha,\sL^2}+C[\partial_iu]_{0,\sL^2}
\\
&\leq& C \biggl(1 + \frac{1}{\theta^{\alpha}} \biggr)\varepsilon [u]_{2+\alpha,\sL^2}+C(
\varepsilon,\theta)[u]_{0,\sL^2},
\\
\bigl\llVert \sA_6(\cdot,\cdot,z,\theta)\bigr\rrVert
_{\alpha,\sL^2} &\leq& C \bigl([u]_{0,\sL^2_T}\bigl\llvert
\eta^z_{\theta}\bigr\rrvert _{\alpha} +
[u]_{\alpha,\sL^2} + [u]_{0,\sL^2} \bigr)
\\
&\leq& C\varepsilon[u]_{2 + \alpha,\sL^2} + C(\varepsilon,\theta)[u]_{0,\sL^2},
\\
\bigl\llVert \sA_7(\cdot,\cdot,z,\theta)\bigr\rrVert
_{\alpha,\sL^2}&\leq& \llVert f \rrVert _{\alpha,\sL^2}+[f]_{0,\sL^2}
\bigl[\eta^z_{\theta}\bigr]_{\alpha} \leq \biggl(1+
\frac{1}{\theta^\alpha} \biggr)\llVert f \rrVert _{\alpha,\sL^2}.
\end{eqnarray*}
Choosing first $\theta$ and then $\varepsilon$ to be sufficiently
small, in view of inequality (\ref{estuv00}), we have
\begin{eqnarray*}
&& \llVert u \rrVert _{2+\alpha,\sL^2}+\llVert v \rrVert _{\alpha,\sL^2}
\\
&&\qquad \leq
\tfrac{1}{2} \bigl([u]_{2+\alpha,\sL^2} +\llVert v \rrVert _{\alpha,\sL^2}
\bigr)
\\
&&\quad\qquad{}  +C \bigl(\llVert \Phi \rrVert _{1+\alpha,L^2}+\llVert f \rrVert
_{\alpha,\sL^2} +\llVert u \rrVert _{0,\sL^2}+\llVert v \rrVert
_{0,\sL^2} \bigr).
\end{eqnarray*}
Then
%
\begin{eqnarray}
\label{estuv0} &&\llVert u \rrVert _{2+\alpha,\sL^2}+\llVert v \rrVert
_{\alpha,\sL^2}
\nonumber\\[-8pt]\\[-8pt]\nonumber
&&\qquad \leq C \bigl(\llVert \Phi \rrVert _{1+\alpha,L^2}+\llVert f \rrVert
_{\alpha,\sL^2}+\llVert u \rrVert _{0,\sL^2}+\llVert v \rrVert
_{0,\sL^2} \bigr).
\end{eqnarray}

Next, we estimate $\llVert  v \rrVert  _{0,\sL^2}$. BSPDE (\ref{bspde}) can be
written into the integral form:
%
\begin{eqnarray}
\label{bsde} u(t,x)&=&\Phi(x)\nonumber
\\
&&{} +\int_t^T
\bigl[a^{ij}(s,x)\,\partial ^2_{ij}u(s,x)+b^i(s,x)\,\partial_iu(s,x)+c(s,x)u(s,x)
\nonumber\\[-8pt]\\[-8pt]\nonumber
&&\hspace*{156pt}{}+f(s,x)+\sigma(s,x)v(s,x) \bigr] \,ds
\\
&&{}-\int_t^Tv(s,x) \,dW_s,\qquad dP\mbox{-a.s.}\nonumber
\end{eqnarray}
For any fixed $x\in\bR^n$, it is a BSDE with terminal condition $\Phi
(x)$ and generator
\[
a^{ij}(t,x)\,\partial^2_{ij}u(t,x)+b^i(t,x)\,\partial_iu(t,x)+c(t,x)U +f(t,x)+\sigma(t,x)V.
\]
We have
\begin{eqnarray*}
\hspace*{-2pt}&& E\int_0^T\bigl\llvert v(t,x)\bigr\rrvert
^2 \,dt
\\
\hspace*{-2pt}&&\qquad \leq CE \biggl[\bigl\llvert \Phi(x)\bigr\rrvert ^2+\int
_0^T\bigl\llvert a^{ij}(t,x)\,\partial
^2_{ij}u(t,x)+b^i(t,x)\,\partial_iu(t,x)
+f(t,x)\bigr\rrvert ^2 \,dt \biggr]
\\
\hspace*{-2pt}&&\qquad \leq CE \biggl[\bigl\llvert \Phi(x)\bigr\rrvert ^2+\int
_0^T \bigl[\bigl\llvert \partial
^2_{ij}u(t,x)\bigr\rrvert ^2+\bigl\llvert
\partial_iu(t,x)\bigr\rrvert ^2 +\bigl\llvert f(t,x)\bigr
\rrvert ^2 \bigr] \,dt \biggr].
\end{eqnarray*}
By the interpolation inequalities in Lemma \ref{interpolation},
\begin{eqnarray*}
\llVert v \rrVert _{0,\sL^2} &\leq &C \bigl(\llVert \Phi \rrVert
_{0,L^2}+\llVert u \rrVert _{2,\sL^2}+\llVert u \rrVert
_{1,\sL^2}+\llVert f \rrVert _{0,\sL^2} \bigr)
\\
&\leq &C\varepsilon[u]_{2+\alpha,\sL^2}+C(\varepsilon) \bigl(\llVert \Phi \rrVert
_{0,L^2}+\llVert u \rrVert _{0,\sL^2}+\llVert f \rrVert
_{0,\sL^2} \bigr).
\end{eqnarray*}
In view of (\ref{estuv0}), choosing $\varepsilon$ to be
sufficiently small, we have
%
\begin{equation}
\label{estuv1} \llVert u \rrVert _{2+\alpha,\sL^2}+\llVert v \rrVert
_{\alpha,\sL^2}\leq C \bigl(\llVert \Phi \rrVert _{1+\alpha,L^2}+\llVert f
\rrVert _{\alpha,\sL^2}+\llVert u \rrVert _{0,\sL^2} \bigr).
\end{equation}

We now establish a maximum principle of $u$. In BSDE (\ref{bsde}), for
any $(t,x)\in[0,T]\times\bR^n$,
%
\begin{eqnarray}
\label{estuv11} && E\bigl[\bigl\llvert u(t,x)\bigr\rrvert ^2\bigr]\nonumber
\\
&&\qquad \leq CE \biggl[ \bigl\llvert \Phi(x)\bigr\rrvert ^2
\nonumber
\\
&&\hspace*{54pt}{}+\int
_t^T \bigl\llvert a^{ij}(t,x)\,\partial
^2_{ij}u(t,x)+b^i(t,x)\,\partial_iu(t,x)
+f(t,x)\bigr\rrvert ^2\,dt \biggr]
\\
&&\qquad \leq CE \biggl[\int_t^T \bigl(\bigl\llvert
\partial ^2_{ij}u(t,x)\bigr\rrvert ^2+\bigl
\llvert \partial_iu(t,x)\bigr\rrvert ^2 \bigr) \,dt
\biggr]\nonumber
\\
&&\quad\qquad{} +C \bigl(\bigl\llVert \Phi(x)\bigr\rrVert ^2_{0,L^2} +
\llVert f \rrVert _{0,\sL
^2}^2 \bigr).\nonumber
\end{eqnarray}
For any $t\in[0,T]$, repeating all the preceding arguments on $[t,T]$,
we see that the estimate (\ref{estuv1}) still holds for $\llVert  \cdot  \rrVert
_{m+\alpha,\sL^2,t}$, that is,
\[
\llVert u \rrVert _{2+\alpha,\sL^2,t}\leq C \bigl(\llVert \Phi \rrVert
_{1+\alpha,L^2}+\llVert f \rrVert _{\alpha,\sL^2}+\llVert u \rrVert
_{0,\sL^2,t}\bigr).
\]
Taking supremum on both sides of (\ref{estuv11}), we have
\begin{eqnarray*}
\sup_xE\bigl[\bigl\llvert u(t,x)\bigr\rrvert
^2\bigr] &\leq&C \bigl(\llVert u \rrVert _{2+\alpha,\sL^2,t}+\bigl\llVert
\Phi(x)\bigr\rrVert ^2_{0,L^2} +\llVert f \rrVert
_{0,\sL^2}^2 \bigr)
\\
&\leq&C \bigl(\llVert u \rrVert _{0,\sL^2,t}+\bigl\llVert \Phi(x)\bigr\rrVert
^2_{1+\alpha,L^2}+\llVert f \rrVert _{\alpha,\sL^2}^2
\bigr)
\\
&\leq&C \biggl(\int_t^T \sup
_x E\bigl\llvert u(s,x)\bigr\rrvert ^2 \,ds+\bigl
\llVert \Phi(x)\bigr\rrVert ^2_{1+\alpha,L^2} +\llVert f \rrVert
_{\alpha,\sL^2}^2 \biggr).
\end{eqnarray*}
From Gronwall's inequality, we have
%
\begin{equation}
\label{estuv111} \qquad\llVert u \rrVert _{0,\sL^2}\leq\int_0^T
\sup_xE\bigl[\bigl\llvert u(t,x)\bigr\rrvert ^2
\bigr] \,dt\leq C \bigl(\bigl\llVert \Phi(x)\bigr\rrVert ^2_{1+\alpha,L^2}
+\llVert f \rrVert _{\alpha,\sL^2}^2 \bigr).
\end{equation}
By (\ref{estuv1}) and (\ref{estuv111}), we conclude that
%
\begin{equation}
\label{estuv2} \llVert u \rrVert _{2+\alpha,\sL^2}+\llVert v \rrVert
_{\alpha,\sL^2}\leq C \bigl(\llVert \Phi \rrVert _{1+\alpha,L^2}+\llVert f
\rrVert _{\alpha,\sL^2} \bigr).
\end{equation}

In a similar way, we have
%
\begin{eqnarray}
\label{estuv3} \llVert u \rrVert _{\alpha,\sS^2} &\leq& C \bigl[\llVert \Phi
\rrVert _{\alpha,L^2}+\llVert u \rrVert _{2+\alpha,\sL^2}+\llVert v \rrVert
_{\alpha,\sL^2}+\llVert f \rrVert _{\alpha,\sL^2} \bigr]
\nonumber\\[-8pt]\\[-8pt]\nonumber
&\leq& C \bigl[\llVert \Phi \rrVert _{1+\alpha,L^2}+\llVert f \rrVert
_{\alpha,\sL^2} \bigr].
\end{eqnarray}
The proof is complete.
\end{pf}

Using the method of continuation (see Gilbarg and Trudinger \cite{Gilbarg2001}, Theorem~5.2), we have from the Theorems \ref{exist1}
and \ref{priorest2} the following existence and uniqueness result
for BSPDE (\ref{bspde}).

%
\begin{teo}\label{exist2}
Let the Assumptions \ref{superparab2} and \ref{boundedness} be
satisfied, and
\[
(\Phi, f)\in C^{1+\alpha}\bigl(\bR^n, L^2(\Omega)
\bigr)\times C^{\alpha}\bigl(\bR ^n, \sL^2_{\bF}(0,T)
\bigr).
\]
Then BSPDE (\ref{bspde}) has a unique solution $(u,v)\in(C^\alpha
_{\sS^2}\cap C^{2+\alpha}_{\sL^2})\times C^{\alpha}_{\sL^2}$.
Moreover, there is a positive constant $C=C(\lambda,\Lambda,\alpha,n,d,T)$ such that
\[
\llVert u \rrVert _{\alpha,\sS^2}+\llVert u \rrVert _{2+\alpha,\sL^2}+\llVert v
\rrVert _{\alpha,\sL
^2}\leq C \bigl(\llVert \Phi \rrVert _{1+\alpha,L^2}+\llVert
f \rrVert _{\alpha,\sL^2}\bigr).
\]
\end{teo}

\begin{pf}
Define
\[
Lu:=a^{ij}\,\partial^2_{ij}u+b^i\,
\partial_iu+cu, \qquad Mv: =\sigma v;
\]
and for $\tau\in[0,1]$,
\[
L_{\tau}u:=(1-\tau)Lu+\tau\Delta u, \qquad M_{\tau}v:=(1-\tau
)Mv+\tau v,
\]
with $\Delta$ being the Laplacian of $\bR^n$.

Consider the following space:
\begin{eqnarray*}
\sJ^{\alpha}&:=& \biggl\{(u,v)\in\bigl(C^\alpha_{\sS^2}\cap
C^{2+\alpha
}_{\sL^2}\bigr)\times C^{\alpha}_{\sL^2}\dvtx
\forall t\in[0,T],
\\
&&\hspace*{7pt}u(t,x)=\Phi(x) + \int_t^T F(s,x)\,ds - \int
_t^T v(s,x) \,dW_s;
\\
&&\hspace*{7pt}\mbox{for some } (\Phi, F)\in C^{1+\alpha}\bigl(\bR^n,
L^2(\Omega )\bigr)\times C^{\alpha}\bigl(\bR^n,
\sL^2_{\bF}(0,T)\bigr) \biggr\},
\end{eqnarray*}
equipped with the norm of $(u,v)\in\sJ^{\alpha}$:
\[
\bigl\llVert (u,v)\bigr\rrVert _{\sJ^{\alpha}}:=\llVert u \rrVert
_{\alpha,\sS^2}+\llVert u \rrVert _{2+\alpha,\sL
^2}+\llVert v \rrVert
_{\alpha,\sL^2} +\llVert \Phi \rrVert _{1+\alpha,L^2}+\llVert F\rrVert
_{\alpha,\sL^2}.
\]
Then $\sJ^{\alpha}$ is a Banach space.

Define the mapping $\Pi_{\tau}\dvtx  \sJ^{\alpha} \rightarrow
C^{1+\alpha}(\bR^n, L^2(\Omega))\times C^{\alpha}(\bR^n, \sL
^2_{\bF}(0,T))$ as follows:
\[
\Pi_{\tau}(u,v):=(\Phi, F-L_{\tau}u-M_{\tau}v),
\qquad(u, v)\in \sJ^{\alpha}.
\]
We have
\begin{eqnarray*}
\bigl\llVert \Pi_{\tau}(u,v)\bigr\rrVert&:=& \llVert \Phi \rrVert
_{1+\alpha,L^2}+\llVert F-L_{\tau}u-M_{\tau}v\rrVert
_{\alpha,\sL
^2}
\\
&\leq& \llVert \Phi \rrVert _{1+\alpha,L^2}+\llVert F\rrVert _{\alpha,\sL^2}+
\llVert L_{\tau}u\rrVert _{\alpha,\sL^2}+\llVert M_{\tau}v
\rrVert _{\alpha,\sL^2}
\\
&\leq& C \bigl(\llVert \Phi \rrVert _{1+\alpha,L^2}+\llVert F\rrVert
_{\alpha,\sL^2}+\llVert u \rrVert _{2+\alpha,\sL^2}+\llVert v \rrVert
_{\alpha,\sL^2} \bigr)
\\
&=& C\bigl\llVert (u,v)\bigr\rrVert _{\sJ^{\alpha}}.
\end{eqnarray*}
On the other hand, for all $(t,x)\in[0,T]\times\bR^n$, we have
almost surely
\[
u(t,x)=\Phi(x)+\int_t^T \bigl[L_{\tau}u+M_{\tau}v+(F-L_{\tau
}u-M_{\tau}v)
\bigr] \,ds-\int_t^Tv(s,x) \,dW_s.
\]
Then we have from Theorem \ref{priorest2} the following estimate:
\[
\llVert u \rrVert _{\alpha,\sS^2}+\llVert u \rrVert _{2+\alpha,\sL^2}+\llVert v
\rrVert _{\alpha,\sL
^2}\leq C \bigl(\llVert \Phi \rrVert _{1+\alpha,L^2}+\llVert
F-L_{\tau}u-M_{\tau}v\rrVert _{\alpha,\sL^2}\bigr).
\]
Thus, we obtain the following inverse inequality:
\begin{eqnarray*}
\bigl\llVert (u,v)\bigr\rrVert _{\sJ^{\alpha}}
&=& \llVert u \rrVert _{\alpha,\sS^2}+\llVert u \rrVert _{2+\alpha,\sL^2}+\llVert
v \rrVert _{\alpha,\sL
^2}+\llVert \Phi \rrVert _{1+\alpha,L^2}+\llVert F\rrVert
_{\alpha,\sL^2}
\\
&\leq& \llVert u \rrVert _{\alpha,\sS^2}+\llVert u \rrVert _{2+\alpha,\sL^2}+
\llVert v \rrVert _{\alpha,\sL^2}+\llVert \Phi \rrVert _{1+\alpha,L^2}
\\
&&{} +\llVert F-L_{\tau}u-M_{\tau}v\rrVert _{\alpha,\sL^2}
+\llVert L_{\tau}u\rrVert _{\alpha,\sL^2}+\llVert
M_{\tau}v\rrVert _{\alpha,\sL^2}
\\
&\leq& C \bigl(\llVert \Phi \rrVert _{1+\alpha,L^2}+\llVert F-L_{\tau}u-M_{\tau}v
\rrVert _{\alpha,\sL^2} \bigr)
\\
&=& C\bigl\llVert \Pi_{\tau}(u,v)\bigr\rrVert.
\end{eqnarray*}
Theorem \ref{exist1} implies that $\Pi_1$ is onto. Then, in view of
the method of continuation in Gilbarg and Trudinger \cite{Gilbarg2001}, Theorem~5.2, page~75, $\Pi_{\tau}$ is also onto for all $\tau\in
[0,1)$. In particular, $\Pi_0$ is onto. The desired result follows.
\end{pf}

Similar to Proposition \ref{contintime}, we have the following H\"
{o}lder time-continuity of $u$.

\begin{prop}
Let Assumptions \ref{superparab2} and \ref{boundedness} be
satisfied and $(\Phi, f)\in C^{1+\alpha}(\bR^n, L^2(\Omega))\times
C^{\alpha}(\bR^n, \sL^2_{\bF}(0,T))$. Let $(u,v)\in(C^\alpha_{\sS
^2}\cap C^{2+\alpha}_{\sL^2})\times C^{\alpha}_{\sL^2}$ solve BSPDE
(\ref{bspde}). Then, for any $\tau\in[0,T]$,
\[
\bigl\llVert u(\cdot,\cdot)-u(\cdot-\tau,\cdot)\bigr\rrVert _{\alpha,\sL^2,\tau}
\leq C \tau^{1/2} \bigl(\llVert \Phi \rrVert _{1+\alpha,L^2}+\llVert f
\rrVert _{\alpha,\sL^2} \bigr),
\]
where $C=C(\lambda,\Lambda,\alpha,T,n,d)$.
\end{prop}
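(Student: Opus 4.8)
The plan is to follow the proof of Proposition~\ref{cont_in_time} almost verbatim, the only new ingredient being the richer generator produced by the space-variable lower-order coefficients. Writing the solution in the integral (BSDE) form \eqref{bsde} and subtracting, I obtain for every $x$ and every $t\in[\tau,T]$
\[
u(t,x)-u(t-\tau,x)=-\int_{t-\tau}^t F(s,x)\,ds+\int_{t-\tau}^t v_l(s,x)\,dW^l_s,
\]
where $F(s,x):=a_{ij}(s,x)\partial^2_{ij}u(s,x)+b_i(s,x)\partial_iu(s,x)+c(s,x)u(s,x)+f(s,x)+\sigma_l(s,x)v_l(s,x)$ is the full generator. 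Squaring, taking expectation, applying Cauchy--Schwarz to the Lebesgue integral and the It\^o isometry to the stochastic integral, then integrating in $t$ over $[\tau,T]$ and interchanging the order of integration by Fubini, produces a factor $\tau$ in the squared $\sL^2$-norm (hence $\tau^{1/2}$ in the norm itself), reducing everything to time-independent quantities.

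To control the $[\,\cdot\,]_{0,\sL^2,\tau}$ part of the target norm, I use the uniform boundedness of $a,b,c,\sigma$ from Assumption~\ref{bounded} to get $|F(s,x)|^2\le C\bigl(|\partial^2_{ij}u(s,x)|^2+|\partial_iu(s,x)|^2+|u(s,x)|^2+|f(s,x)|^2+|v(s,x)|^2\bigr)$, so that the reduced quantity is dominated by $[u]_{2,\sL^2}^2+[u]_{1,\sL^2}^2+[u]_{0,\sL^2}^2+[f]_{0,\sL^2}^2+[v]_{0,\sL^2}^2$, each term of which is bounded by $C\bigl(\|\Phi\|_{1+\alpha,L^2}^2+\|f\|_{\alpha,\sL^2}^2\bigr)$ through the a priori estimate of Theorem~\ref{prior_est_2}.

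For the H\"older semi-norm in $x$, I form the double difference
\[
[u(t,x)-u(t-\tau,x)]-[u(t,\bar x)-u(t-\tau,\bar x)]=-\int_{t-\tau}^t[F(s,x)-F(s,\bar x)]\,ds+\int_{t-\tau}^t[v_l(s,x)-v_l(s,\bar x)]\,dW^l_s.
\]
The stochastic part is treated exactly as above and yields $\tau[v]_{\alpha,\sL^2}^2|x-\bar x|^{2\alpha}$; for the drift part I split every product in $F(s,x)-F(s,\bar x)$ in the Leibniz manner of Remark~\ref{alpha_norm}, e.g.\ $a_{ij}(s,x)\partial^2_{ij}u(s,x)-a_{ij}(s,\bar x)\partial^2_{ij}u(s,\bar x)=a_{ij}(s,x)[\partial^2_{ij}u(s,x)-\partial^2_{ij}u(s,\bar x)]+[a_{ij}(s,x)-a_{ij}(s,\bar x)]\partial^2_{ij}u(s,\bar x)$, and bound each factor using $\|a\|_{\alpha,\sL^\infty}\le\Lambda$ (and likewise for $b,c,\sigma$) together with $[u]_{2+\alpha,\sL^2}$, $[u]_{2,\sL^2}$, etc. Repeating the Cauchy--Schwarz/It\^o/Fubini steps then extracts the factors $\tau$ and $|x-\bar x|^{2\alpha}$, and all emerging coefficients are again $\le C\bigl(\|\Phi\|_{1+\alpha,L^2}+\|f\|_{\alpha,\sL^2}\bigr)$ by Theorem~\ref{prior_est_2}. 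The only point requiring care is the bookkeeping that guarantees each coefficient-difference term is absorbed into the already-established a priori bound; since this is immediate from Assumption~\ref{bounded} and Remark~\ref{alpha_norm}, no genuinely new difficulty arises beyond that in Proposition~\ref{cont_in_time}, and the claimed $\tau^{1/2}$ rate follows.
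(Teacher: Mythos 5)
Your proposal is correct and follows essentially the same route the paper intends: the paper gives no separate proof for this proposition, simply stating it is ``similar to Proposition \ref{cont_in_time}'', and your argument is exactly that adaptation --- write the equation in integral (BSDE) form, subtract, apply Cauchy--Schwarz, the It\^o isometry and Fubini to extract the factor $\tau$, handle the additional lower-order and space-dependent coefficients via Assumption \ref{bounded} and the product rule of Remark \ref{alpha_norm}, and absorb everything into the a priori estimate of Theorem \ref{prior_est_2}. No gap.
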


At the end of the section, we discuss the consequence of the preceding
results on a deterministic PDE. Consider the deterministic functions
\begin{eqnarray*}
\Phi\dvtx \bR^n &\rightarrow&\bR,\qquad a\dvtx [0,T]\times\bR^n
\rightarrow \cS^n,
\\
b\dvtx [0,T]\times\bR^n&\rightarrow&\bR^n,\qquad\sigma\dvtx [0,T]
\times\bR ^n\rightarrow\bR^d,
\\
c,f\dvtx  [0, T]\times\bR^n&\rightarrow&\bR.
\end{eqnarray*}
As we know, a BSPDE with deterministic coefficients is in fact a
deterministic PDE. Then the second unknown variable of BSPDE (\ref
{bspde}) turns out to be 0, and BSPDE (\ref{bspde}) is in fact the
following deterministic PDE:
%
\begin{equation}
\label{PDE} \quad\cases{ \partial_tu(t,x)=a^{ij}(t,x)\,\partial^2_{ij}u(t,x)+b^i(t,x)\,\partial
_iu(t,x)
\vspace*{3pt}\cr
\hspace*{49pt}{} +c(t,x)u(t,x)+f(t,x),\qquad (t,x)\in[0,T)\times \bR^n;
\vspace*{5pt}\cr
u(T,x)=\Phi(x),\hspace*{118pt} x\in\bR^n,}
\end{equation}
which does not involve the coefficient $\sigma$ anymore.

Note that the classical H\"{o}lder space $C^{m+\alpha}(\bR^n)$
consists of all the deterministic elements of the H\"{o}lder space
$C^{m+\alpha}(\bR^n, L^p(\Omega))$, and the two H\"{o}lder
functional spaces $C^{m+\alpha}(\bR^n, L^p(0,T;\bR^{\iota}))$ and
$C^{m+\alpha}(\bR^n, C[0,T])$ consist of all the deterministic
elements of the two H\"{o}lder functional spaces
\[
C^{m+\alpha}\bigl(\bR^n, \sL^p_{\bF}
\bigl(0,T;\bR^{\iota}\bigr)\bigr)\quad\mbox {and}\quad C^{m+\alpha}
\bigl(\bR^n, \sS^p_{\bF}[0,T]\bigr),
\]
respectively.
Assumption \ref{boundedness} is replaced with the following one.

%
\begin{ass}\label{bounded1} The functions
\[
a\in C^{\alpha}\bigl(\bR^n, L^{\infty}\bigl(0,T;
\bR^{n\times n}\bigr)\bigr), \qquad b\in C^{\alpha}\bigl(
\bR^n, L^{\infty}\bigl(0,T;\bR^n\bigr)\bigr),
\]
and $c \in C^{\alpha}(\bR^n, L^{\infty}(0,T))$. There is a constant
$\Lambda>0$ such that
\[
\llVert a \rrVert _{\alpha,L^{\infty}}+\llVert b \rrVert _{\alpha,L^{\infty}}+\llVert c
\rrVert _{\alpha,L^{\infty}}\leq\Lambda.
\]
\end{ass}

In view of Theorem \ref{exist2}, we have the following existence,
uniqueness and regularity result for PDE (\ref{PDE}).

\begin{prop}\label{deterPDE}
Let the Assumptions \ref{superparab2} and \ref{bounded1} be
satisfied, and
\[
(\Phi, f)\in C^{1+\alpha}\bigl(\bR^n\bigr)\times
C^{\alpha}\bigl(\bR^n, L^2(0,T)\bigr).
\]
Then PDE (\ref{PDE}) has a unique solution
\[
u\in C^\alpha\bigl(\bR^n, C[0,T]\bigr)\cap C^{2+\alpha}
\bigl(\bR^n, L^2(0,T)\bigr)
\]
such that
\[
\llVert u \rrVert _{\alpha,C}+\llVert u \rrVert _{2+\alpha,L^2}\leq C
\bigl(\llvert \Phi\rrvert _{1+\alpha}+\llVert f \rrVert _{\alpha,L^2}\bigr),
\]
where $C=C(\lambda,\Lambda,\alpha,n,d,T)$.
\end{prop}

The preceding proposition shows that the solution $u$ to PDE (\ref
{PDE}) is $(2+\alpha)$-H\"{o}lder continuous if $\Phi$ is $(1+\alpha
)$-H\"{o}lder continuous and $f$ is $\alpha$-H\"{o}lder continuous. It
seems to have a novelty as explained in the following remark.

%
\begin{rmk}
Mikulevicius \cite{Mikulevicius2000} studies the Cauchy problem of an
SPDE in a functional H\"{o}lder space, and includes the following a
priori estimate for PDE~(\ref{PDE}):
if $\Phi=0$, $f(t,\cdot)\in C^{\alpha}(\bR^n)$ for $t\in[0,T]$,
and $\sup_t\llvert  f(t,\cdot)\rrvert  _{\alpha}<+\infty$, then PDE~(\ref{PDE})
has a unique solution $u$ such that
\[
u(t,\cdot)\in C^{2+\alpha}\bigl(\bR^n\bigr)\qquad\forall t\in[0,T]
\quad\mbox {and}\quad\sup_t\bigl\llvert u(t, \cdot)\bigr
\rrvert _{2+\alpha}<C\sup_t\bigl\llvert f(t, \cdot )
\bigr\rrvert _{\alpha}.
\]
In contrast, in Proposition~\ref{deterPDE} we require $f\in C^{\alpha
}(\bR^n, L^2(0,T))$ and assert $u\in C^{2+\alpha}(\bR^n, L^2(0,T))$.
\end{rmk}

\section{Semi-linear BSPDEs}\label{sec5}
In this section, consider the following semi-linear BSPDE:
%
\begin{equation}
\label{semi-linearBSPDE} \cases{ -du(t,x)= \bigl[a^{ij}(t,x)\,\partial^2_{ij}u(t,x)+f
\bigl(t,x,\nabla u(t,x),u(t,x),v(t,x)\bigr) \bigr] \,dt
\vspace*{3pt}\cr
\hspace*{59pt}{} -v(t,x) \,dW_t,\qquad (t,x)\in[0,T)\times\bR^n,
\vspace*{5pt}\cr
u(T,x)= \Phi(x),\hspace*{76pt} x\in\bR^n.}\hspace*{-35pt}
\end{equation}
Here, $a\dvtx [0,T]\times\bR^n\rightarrow\cS^n$ satisfies both
super-parabolicity and boundedness Assumptions \ref{superparab2} and
\ref{boundedness}, $f\dvtx [0,T]\times\Omega\times\bR^n\times\bR
^n\times\bR\times\bR^d\rightarrow\bR$ is jointly measurable, and
$f(\cdot,x,q,u,v)$ is $\bF$-adapted for any $(x,q,u,v)\in\bR
^n\times\bR^n\times\bR\times\bR^d$.

We make the following Lipschitz assumption on $f$.

\begin{ass}\label{Lip} $f_0(\cdot,\cdot):=f(\cdot,\cdot,0,0,0)\in
C^{\alpha}(\bR^n, \sL^2_{\bF}(0,T))$,
and there is a constant $L>0$ such that
\begin{eqnarray*}
&& \bigl\llvert f(t,x,q_1,u_1,v_1)-f(t,x,q_2,u_2,v_2)
\bigr\rrvert
\\
&&\qquad \leq L\bigl(\llvert q_1-q_2\rrvert +\llvert
u_1-u_2\rrvert +\llvert v_1-v_2
\rrvert \bigr),\qquad dt\times dP\mbox{-a.e., a.s.}
\end{eqnarray*}
for any $(q_1,u_1,v_1),(q_2,u_2,v_2)\in\bR^n\times\bR\times\bR^d$
and $x\in\bR^d$.
\end{ass}

Then we have the following existence, uniqueness and regularity on
semi-linear BSPDE (\ref{semi-linearBSPDE}).

\begin{teo}\label{well-posed-semi-linear-BSPDE}
Let the Assumptions \ref{superparab2}, \ref{boundedness} and \ref
{Lip} be satisfied, and $\Phi\in C^{1+\alpha}(\bR^n, L^2(\Omega))$.
Then the semi-linear BSPDE (\ref{semi-linearBSPDE}) has a unique
solution $(u,v)\in(C^\alpha_{\sS^2}\cap C^{2+\alpha}_{\sL
^2})\times C^{\alpha}_{\sL^2}$. Moreover,
\[
\llVert u \rrVert _{\alpha,\sS^2}+\llVert u \rrVert _{2+\alpha,\sL^2}+\llVert v
\rrVert _{\alpha,\sL
^2}\leq C\bigl(\llVert \Phi \rrVert _{1+\alpha,L^2}+\llVert
f_0\rrVert _{\alpha,\sL^2}\bigr),
\]
where $C=C(\lambda,\Lambda,\alpha,n,d,T)$.
\end{teo}

The proof requires the following two additional preliminary lemmas.
Consider the following linear BSPDE:
%
\begin{equation}
\label{Beta-BSPDE} \cases{ -du(t,x)= \bigl[a^{ij}(t,x)\,\partial^2_{ij}u(t,x)-
\beta u(t,x)+f(t,x) \bigr] \,dt
\vspace*{3pt}\cr
\hspace*{57pt}{}-v(t,x) \,dW_t,\qquad (t,x)\in[0,T)\times \bR^n,
\vspace*{3pt}\cr
u(T,x)=0,\hspace*{92pt} x\in\bR^n,}
\end{equation}
where $a\dvtx [0,T]\times\bR^n\rightarrow\cS^n$ is the same as before,
and $\beta>0$ is a constant. When $a(t,x)\equiv a(t)$, define
\[
G^{\beta}_{s,t}(x):=e^{-\beta(s-t)}G_{s,t}(x),
\qquad0\leq t\leq s\leq T.
\]

%
\begin{lem}\label{estbeta}
For a universal constant $C=C(\lambda,\Lambda,\alpha,\gamma,n,T)$,
we have:
\begin{longlist}[(iii)]
\item[(i)]
For $\alpha\in(0,1)$ and $\gamma\in\Gamma$ such that $\llvert  \gamma\rrvert  \le2$,
%
\begin{equation}
\label{estibeta-1} \qquad\int_{\tau}^s\int
_{\bR^n}\bigl\llvert D^{\gamma}G^{\beta
}_{s,t}(x)
\bigr\rrvert \llvert x\rrvert ^{\alpha} \,dx \,dt\leq C\beta^{-1+ ({\llvert  \gamma\rrvert  -\alpha})/{2}},
\qquad T\geq s>\tau\geq0.
\end{equation}

\item[(ii)] For $\gamma\in\Gamma$ such that $\llvert  \gamma\rrvert  =2$ and $0\leq\tau
\leq s\leq T$,
%
\begin{eqnarray}
\label{estibeta-2} \int_\tau^s\biggl\llvert \int
_{\llvert  y \rrvert  \leq\eta}D^{\gamma}G^{\beta
}_{s,t}(y) \,dy
\biggr\rrvert \,dt &=&\int_\tau^s\biggl\llvert \int
_{\llvert  y \rrvert  \geq\eta}D^{\gamma}G^{\beta
}_{s,t}(y) \,dy
\biggr\rrvert \,dt
\nonumber\\[-8pt]\\[-8pt]\nonumber
&\leq& C\beta^{-1}\qquad\forall\eta>0.
\end{eqnarray}

\item[(iii)] For $\gamma\in\Gamma$ such that $\llvert  \gamma\rrvert  =2$,
%
\begin{equation}
\label{estibeta-3} \int_{\llvert  y \rrvert  \le\eta}\sup_{\tau\leq s}\int
_\tau^s\bigl\llvert D^{\gamma
}G^{\beta}_{s,t}(y)
\bigr\rrvert \llvert y \rrvert ^{\alpha} \,dt \,dy\leq C\beta^{-1}\eta
^{\alpha}\qquad\forall\eta>0.
\end{equation}

\item[(iv)] For any $x,\bar{x}\in\bR^n$ and $\gamma\in\Gamma$ such that
$\llvert  \gamma\rrvert  =2$,
%
\begin{eqnarray}
\label{estibeta-4} && \int_{\llvert  y-x \rrvert  >\eta}\sup_{\tau\leq s}\int
_\tau^s \bigl\llvert D^{\gamma}G^{\beta}_{s,t}(x-y)-D^{\gamma}G^{\beta
}_{s,t}(
\bar{x}-y)\bigr\rrvert \llvert \bar{x}-y\rrvert ^{\alpha} \,dt \,dy
\nonumber\\[-8pt]\\[-8pt]\nonumber
&&\qquad \leq C\beta^{-1} \llvert x-\bar{x}\rrvert ^{\alpha}
\qquad\forall\eta>0.
\end{eqnarray}
\end{longlist}
\end{lem}

%
\begin{lem}\label{smallest}
Let $f\in C^{\alpha}(\bR^n, \sL^2_{\bF}(0,T))$. If $(u,v)\in
(C^\alpha_{\sS^2}\cap C^{2+\alpha}_{\sL^2})\times C^{\alpha}_{\sL
^2}$ is the solution of BSPDE (\ref{Beta-BSPDE}), then
\[
\llVert u \rrVert _{\alpha,\sS^2}+\llVert u \rrVert _{2+\alpha,\sL^2}+\llVert v
\rrVert _{\alpha,\sL
^2}\leq C(\beta)\llVert f \rrVert _{\alpha,\sL^2},
\]
where $C(\beta):=C(\beta,\lambda,\Lambda,\alpha,n,d,T)>0$, and
converges to zero as $\beta\to\infty$.
\end{lem}

\begin{pf}
\textit{Step}~1 [$a(t,x)\equiv a(t)$].
Proceeding similarly as in the proof of Lemma \ref{repreu} and the
Theorems \ref{representationuv} and \ref{exist1}, we have that the
pair $(u,v)$ defined for each $x\in\bR^d$ by
\[
u(t,x):=\int_t^T  \int_{\bR^n}G^{\beta}_{s,t}(x-y)
Y(t;s,y) \,dy \,ds\qquad\forall t\in[0,T], dP\mbox{-a.s.},
\]
and
\begin{eqnarray}
v_l(t,x)&:=&\int_t^T  \int
_{\bR^n}G^{\beta}_{r,t}(x-y)
g_l(s;r,y) \,dy \,dr,\nonumber
\\
\eqntext{dt\times dP\mbox{-a.e., a.s., }l=1, \ldots,d, }
\end{eqnarray}
is the unique solution to the linear BSPDE (\ref{Beta-BSPDE}) with
\[
Y(t;\tau,x):=f(\tau,x)-\int_t^\tau
g_l(r;\tau,x) \,dW^l_r\qquad \forall t\leq
\tau.
\]
In view of the estimates of Lemma \ref{estbeta}, proceeding similarly
as in the proof of the Lemmas \ref{estvarphiY} and \ref{estpsig}
and Theorem \ref{priorest1}, we have
\[
\llVert u \rrVert _{\alpha,\sS^2}+\llVert u \rrVert _{2+\alpha,\sL^2}+\llVert v
\rrVert _{\alpha,\sL
^2}\leq C(\beta)\llVert f \rrVert _{\alpha,\sL^2},
\]
where $C(\beta):=C(\beta,\lambda,\Lambda,\alpha,n,d,T)>0$ is
sufficiently small for sufficiently large $\beta$.

\textit{Step}~2 [$(a^{ij})_{n\times n}$ depends on $x$]. Using the
freezing coefficients method as in Theorem \ref{priorest2}, we have
the desired result.
\end{pf}

\begin{pf*}{Proof of Theorem \ref{well-posed-semi-linear-BSPDE}}
For any $(U_1,V_1)\in(C^\alpha_{\sS^2}\cap C^{2+\alpha}_{\sL
^2})\times C^{\alpha}_{\sL^2}$,
 $f(\cdot,\cdot,\nabla U_1(\cdot,\cdot),U_1(\cdot,\cdot),V_1(\cdot,\cdot))\in C^{\alpha}(\bR^n,
\sL^2_{\bF}(0,T))$ because of Assumption~\ref{Lip} for $f$. In view
of Theorem \ref{exist2},
%
\begin{equation}
\label{semi-1} \cases{ -du_1(t,x) = \bigl[a^{ij}(t,x)\,\partial^2_{ij}u_1(t,x)
\vspace*{3pt}\cr
\hspace*{64pt}{} +f\bigl(t,x,\nabla
U_1(t,x),U_1(t,x),V_1(t,x)\bigr) \bigr] \,dt
\vspace*{3pt}\cr
\hspace*{62pt}{}-v_1(t,x) \,dW_t, \qquad(t,x)\in[0,T)\times
\bR^n,
\vspace*{5pt}\cr
u_1(T,x)=\Phi(x),\hspace*{78pt} x\in\bR^n}
\end{equation}
has a unique solution $(u_1,v_1)\in(C^\alpha_{\sS^2}\cap C^{2+\alpha
}_{\sL^2})\times C^{\alpha}_{\sL^2}$.
For any $(U_2,V_2)\in(C^\alpha_{\sS^2}\cap C^{2+\alpha}_{\sL
^2})\times C^{\alpha}_{\sL^2}$, denote\vspace*{2pt} $(u_2,v_2)\in(C^\alpha_{\sS
^2}\cap C^{2+\alpha}_{\sL^2})\times C^{\alpha}_{\sL^2}$ as the
solution of equation~(\ref{semi-1}) with $(U_1,V_1)$ replaced by
$(U_2,V_2)$. Define
\begin{eqnarray*}
\bar{u}(t,x)&:=& u_1(t,x)-u_2(t,x),\qquad\bar
{U}(t,x):=U_1(t,x)-U_2(t,x),
\\
\bar{v}(t,x)&:=& v_1(t,x)-v_2(t,x), \qquad
\bar{V}(t,x):=V_1(t,x)-V_2(t,x)
\end{eqnarray*}
and
\begin{eqnarray*}
\bar{f}(t,x)&:=& f\bigl(t,x,\nabla U_1(t,x),U_1(t,x),V_1(t,x)
\bigr)
\\
&&{}-f\bigl(t,x,\nabla U_2(t,x),U_2(t,x),V_2(t,x)
\bigr).
\end{eqnarray*}
Then we have
%
\begin{equation}
\label{semi-2}
\qquad\cases{ -d \bigl[e^{\beta t}\bar{u}(t,x) \bigr]=
\bigl(a^{ij}(t,x)\,\partial ^2_{ij}
\bigl[e^{\beta t}\bar{u}(t,x) \bigr]
\vspace*{3pt}\cr
\hspace*{81pt}{}-\beta e^{\beta t}\bar
{u}(t,x)+e^{\beta t}\bar{f}(t,x) \bigr) \,dt
\vspace*{3pt}\cr
\hspace*{77pt}{} -e^{\beta t}\bar{v}(t,x) (t,x) \,dW_t,\qquad (t,x)\in[0,T)\times\bR ^n,
\vspace*{5pt}\cr
e^{\beta T}\bar{u}(T,x)=0,\hspace*{132pt} x\in\bR^n.}
\end{equation}
In view of Lemma \ref{smallest}, we have
\begin{eqnarray*}
&& \bigl\llVert e^{\beta\cdot}\bar{u}\bigr\rrVert _{\alpha,\sS^2}+\bigl\llVert
e^{\beta\cdot}\bar {u}\bigr\rrVert _{2+\alpha,\sL^2} +\bigl\llVert
e^{\beta\cdot}\bar{v}\bigr\rrVert _{\alpha,\sL^2}
\\
&&\qquad \leq C(\beta)\bigl\llVert e^{\beta\cdot}\bar{f}\bigr\rrVert _{\alpha,\sL^2}
\\
&&\qquad \leq C(\beta)L \bigl[\bigl\llVert e^{\beta\cdot}\bar{U}\bigr\rrVert
_{\alpha,\sS^2}+\bigl\llVert e^{\beta\cdot}\bar{U}\bigr\rrVert
_{2+\alpha,\sL^2} +\bigl\llVert e^{\beta\cdot}\bar{V}\bigr\rrVert
_{\alpha,\sL^2} \bigr],
\end{eqnarray*}
with $C(\beta)L <1$ for a sufficiently large $\beta$. Since the
weighted norm\break $\llVert  e^{\beta\cdot}u\rrVert  _{\alpha,\sS^2}+\llVert  e^{\beta
\cdot}u\rrVert  _{2+\alpha,\sL^2}+\llVert  e^{\beta\cdot}v\rrVert  _{\alpha,\sL^2}$
is equivalent to the original one $\llVert  u \rrVert  _{\alpha,\sS^2}+ \llVert  u \rrVert  _{2+\alpha,\sL^2}+\llVert  v \rrVert  _{\alpha,\sL^2}$ in $(C^\alpha_{\sS
^2}\cap C^{2+\alpha}_{\sL^2})\times C^{\alpha}_{\sL^2}$, the
semi-linear BSPDE (\ref{semi-linearBSPDE}) has a unique solution
$(u,v)\in(C^\alpha_{\sS^2}\cap C^{2+\alpha}_{\sL^2})\times
C^{\alpha}_{\sL^2}$. The desired estimate is proved in a similar way.
\end{pf*}

\section*{Acknowledgments}
The second author thanks Xicheng Zhang and
Jinniao Qiu for their valuable comments. Both authors would like to
thank both anonymous reviewers and the Associate Editor for their
careful reading and very helpful comments and suggestions.




%

\printaddresses
\end{document}